\documentclass[12pt,twoside,a4paper]{amsart}
\usepackage{amssymb}
\date{\today}


\def\w{\wedge}

\def\dbar{\bar\partial}

\def\R{{\mathbb R}}
\def\C{{\mathbb C}}
\def\w{{\wedge}}

\def\B{{\mathbb B}}

\def\S{{\mathcal S}}

\def\F{{\mathcal F}}
\def\Pr{{\mathcal P}}
\def\K{{\mathcal K}}

\def\Hom{{\rm Hom\, }}
\def\codim{{\rm codim\,}}

\def\Im{{\rm Im\, }}

\def\K{{\mathcal K}}
\def\Ker{{\rm Ker\,  }}
\def\rank{{\rm rank\, }}

\def\E{{\mathcal E}}

\def\Ok{{\mathcal O}}

\def\L{{\mathcal L}}

\def\Re{{\rm Re\,  }}

\def\L{{\mathcal L}}

\def\J{{\mathcal J}}
\def\nbh{neighborhood }
\def\PM{{\mathcal{PM}}}
\def\HM{{\mathcal{PM}}}
\def\be{\begin{equation}}
\def\ee{\end{equation}}

\newtheorem{thm}{Theorem}[section]
\newtheorem{lma}[thm]{Lemma}
\newtheorem{cor}[thm]{Corollary}
\newtheorem{prop}[thm]{Proposition}

\theoremstyle{definition}

\theoremstyle{remark}

\newtheorem{preremark}{Remark}
\newtheorem{preex}{Example}

\newenvironment{remark}{\begin{preremark}}{\qed\end{preremark}}
\newenvironment{ex}{\begin{preex}}{\qed\end{preex}}

\numberwithin{equation}{section}

\title[]{Koppelman formulas and 
the $\dbar$-equation  on  an analytic space}

\begin{document}

\date{\today}

\author{Mats Andersson \& H\aa kan Samuelsson}

\address{Department of Mathematics\\Chalmers University of Technology and the University of 
Gothenburg\\S-412 96 G\"OTEBORG\\SWEDEN}

\email{matsa@chalmers.se, hasam@chalmers.se}

\subjclass{32A26, 32A27, 32B15,  32C30}

\thanks{The first author was
  partially supported by the Swedish 
  Research Council; the second author was partially supported by
  a Post Doctoral Fellowship from the Swedish 
  Research Council. Both authors thanks the Institut Mittag-Leffler 
  (Djursholm, Sweden) where part of this work was done.}

\begin{abstract}
Let $X$ be an analytic space of pure dimension.
We introduce  a formalism  to generate intrinsic weighted Koppelman formulas
on $X$ that provide  solutions to the $\dbar$-equation. 
We prove that if $\phi$ is a smooth  
$(0,q+1)$-form on a Stein space $X$ with $\dbar\phi=0$, then there is 
a smooth $(0,q)$-form $\psi$ on $X_{reg}$ with at most polynomial
growth at $X_{sing}$ such that $\dbar\psi=\phi$.
The integral formulas also give  other new existence results
for the  $\dbar$-equation and Hartogs theorems, 
as well as new proofs of various known results.
\end{abstract}

\maketitle

\section{Introduction}

Let $X$ be an analytic space  of pure dimension $d$
and let $\Ok_X$ be the structure sheaf of (strongly)
holomorphic functions.
Locally  $X$ is a subvariety of
a domain  $\Omega$ in $\C^n$ and then 
$\Ok_X=\Ok/\J$, where $\J$ is the sheaf in $\Omega$ 
of holomorphic functions that vanish on  $X$.
In the same way we say that $\phi$ is  a  smooth $(0,q)$-form on $X$,
$\phi\in\E_{0,q}(X)$, if given a local embedding,  there is 
a  smooth form in a \nbh in the ambient space 
such that $\phi$ is its pull-back to $X_{reg}$.
It is well-known that this defines an intrinsic sheaf
$\E_{0,q}^X$ on $X$. 
It was proved in \cite{HPo} that if 
$X$ is embedded as a reduced complete intersection (see Example~\ref{ulla})
in a pseudoconvex domain
and $\phi$ is a $\dbar$-closed smooth form on  $X$, then 
there is a solution $\psi$ to $\dbar\psi=\phi$ on $X_{reg}$.
It has been an open question since then whether this
holds  more generally.  In this paper we prove that 
this   is indeed true for  any Stein  space $X$.

We introduce Koppelman formulas with weight factors
on $X$ by means of which we can obtain intrinsic solutions 
operators for the $\dbar$-equation. 
We begin with  a semi-global existence result. 

\begin{thm}\label{main} Let $Z$ be an analytic subvariety of pure dimension of
a pseudoconvex domain $\Omega\subset\C^n$ and assume that $\omega\subset\subset \Omega$.
There are linear operators $\K\colon\E_{0,q+1}(Z)\to\E_{0,q}(Z_{reg}\cap\omega)$
and $\Pr\colon \E_{0,0}(Z)\to\Ok(\omega)$ such that
\begin{equation}\label{koppelman}
\phi(z)=\dbar \K\phi(z)+\K(\dbar\phi)(z),\quad z\in Z_{reg}\cap\omega,
\phi\in\E_{0,q}(Z),\ q>0,
\end{equation}
and
\begin{equation}\label{koppelman2}
\phi(z)= \K(\dbar\phi)(z)+ \Pr\phi(z),\quad z\in Z_{reg}\cap\omega,\ \phi\in\E_{0,0}(Z).
\end{equation}
Moreover, there is a number $M$ such that 
\begin{equation}\label{asymp}
\K\phi(z)=\Ok(\delta(z)^{-M}),
\end{equation}
where $\delta(z)$ is the distance to $Z_{sing}$.
\end{thm}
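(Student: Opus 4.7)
The plan is to build the operators $\K$ and $\Pr$ by combining a weighted Koppelman formula in the ambient $\Omega$ with a residue/structure current associated to $Z$ coming from a locally free resolution of $\Ok_Z$. In the ambient space we have, for any weight $g(\zeta,z)$ that is holomorphic in $z$, of appropriate bidegree, and normalized so that $g_0=1$ on the diagonal, a Koppelman identity
\[
\phi(z)=\dbar_z\!\int_\Omega g\w B\w\phi(\zeta)+\int_\Omega g\w B\w\dbar\phi(\zeta)+\int_\Omega g\w\phi(\zeta),
\]
where $B$ is the Bochner-Martinelli form in $\C^n$. The first move is therefore to arrange a weight that makes sense on $Z$ rather than on $\Omega$: by pseudoconvexity of $\Omega$ we can take a Berndtsson weight with compact support in $\zeta\in\Omega$ and holomorphic dependence on $z\in\omega$, and then multiply it by a factor built from Hefer forms for a set of generators $f_1,\ldots,f_m$ of $\J$ (and, more generally, Hefer maps for the differentials in a locally free resolution of $\Ok_Z$).

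Next, I would pair this ambient kernel with the intrinsic structure form $\omega_Z$ on $Z$ coming from the Andersson-Wulcan current $R^Z$ of such a free resolution: on $Z_{reg}$, $\omega_Z$ is a smooth $(d,0)$-form which is meromorphic across $Z_{sing}$ with a pole of some fixed finite order, and integration of ambient test forms against $R^Z$ coincides with integration of their pull-back against $\omega_Z$ on $Z_{reg}$. The definition of $\K\phi(z)$ and $\Pr\phi(z)$ is then to fix any smooth extension $\tilde\phi$ of $\phi$ into $\Omega$, plug $\tilde\phi$ into the ambient weighted Koppelman formula, and keep only the parts paired with $R^Z$. The Hefer factor in $g$ ensures that this pairing is independent of the chosen extension $\tilde\phi$, so that $\K$ and $\Pr$ genuinely act on $\E_{0,\bullet}(Z)$; and it simultaneously forces $\Pr\phi$ to be holomorphic in $z$.

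The homotopy identities \eqref{koppelman} and \eqref{koppelman2} are then obtained by applying $\dbar_z$ to $\K\phi$ and using the ambient Koppelman identity together with $\dbar R^Z=0$, so that the residue current only sees the value of the extension on $Z$; this reduces the claim to a formal computation that mirrors the $\C^n$ case. The growth estimate \eqref{asymp} is inherited directly from the poles of $\omega_Z$ at $Z_{sing}$: since $\omega_Z$ is a section of a meromorphic sheaf with poles controlled by the free resolution, $\K\phi$ is a locally finite sum of integrals of smooth forms against a kernel with an explicit algebraic singularity along $\{\delta=0\}$, and thus $\K\phi(z)=\Ok(\delta(z)^{-M})$ for an $M$ that depends only on the resolution.

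The main obstacle I expect is proving the homotopy identity pointwise on $Z_{reg}$, not just in the sense of currents on $Z$. The difficulty is that both the ambient Koppelman kernel and the residue $R^Z$ have singularities, so justifying the differentiation under the integral sign and the interchange with the residue current requires showing that $\K\phi$ extends smoothly across $Z_{reg}\setminus\{\zeta=z\}$ and that the boundary terms produced by regularizations of $R^Z$ vanish in the limit. This in turn relies on the specific structure (almost semi-meromorphic type) of the kernels that one gets by combining Hefer forms with $B$, and on the fact that the Coleff-Herrera-type residues involved are annihilated by $\bar f_j$ and their conjugate differentials.
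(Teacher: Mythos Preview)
Your outline has the right architecture — ambient weighted Koppelman formula, Hefer morphisms attached to a free resolution of $\Ok_Z$, pairing with the Andersson--Wulcan current $R$, and a Berndtsson-type weight with compact support and holomorphic $z$-dependence. That is exactly the paper's strategy. But two of the mechanisms you invoke are not correct, and a third step is much heavier than you indicate.

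First, you write that the Koppelman identities follow ``together with $\dbar R^Z=0$''. In general $R$ is \emph{not} $\dbar$-closed: one only has $\nabla_f R=0$, i.e.\ $\dbar R_k=f_{k+1}R_{k+1}$, and $\dbar R_{N}=0$ only for the top piece (so $\dbar R=0$ only in the Cohen--Macaulay case with $N=p$). What actually drives the Koppelman identity is that the combination $g^\lambda=f(z)HU^\lambda+HR^\lambda$ is a genuine \emph{weight} (i.e.\ $\nabla_\eta$-closed with $g^\lambda_0=1$ on the diagonal); this uses the full Hefer relations $\delta_\eta H^\ell_k=H^\ell_{k-1}f_k-f_{\ell+1}(z)H^{\ell+1}_k$ together with $\nabla_f U^\lambda=1-R^\lambda$. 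One then restricts to $z\in Z_{reg}$, where the $f(z)HU^\lambda$ part drops out, and carefully lets $\lambda\searrow 0$. The analytic continuation / limit step is nontrivial because $R^\lambda$ and $B$ are both singular; the paper handles it term by term in $k$, using that $R_k^\lambda\to R_k$ as measures on $Z_{reg}$ and that the lower-$k$ terms contribute $O(\lambda)$.

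Second, the independence of $\K\phi$ from the chosen extension $\tilde\phi$ is not a consequence of the Hefer factor nor of $\dbar R=0$. It comes from the structural fact that on $Z_{reg}$ one has $R_k=\gamma_k\lrcorner[Z]$ for smooth $\gamma_k$, so $R\wedge\Phi=0$ whenever the pull-back of the $(0,q)$-form $\Phi$ to $Z_{reg}$ vanishes, and then the standard extension property (SEP) of $R$ pushes this across $Z_{sing}$. You should isolate this as a separate lemma.

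Third, the estimate $\K\phi=\Ok(\delta^{-M})$ is not ``inherited directly'' from poles of a structure form. The paper's argument has two parts. One splits the integral with a cutoff $\chi(\delta(\zeta)/\delta(z))$: on the piece where $\delta(\zeta)\sim\delta(z)$ one uses $R_k=\gamma_k\lrcorner[Z]$ and the bound $|\gamma_k|\lesssim\delta^{-M}$; on the complementary piece one uses that $R$ has finite order as a current together with $|\eta|\gtrsim\delta(z)$. Establishing $|\gamma_p|\lesssim\delta^{-M}$ itself is a substantial algebraic step (the paper's Section~5): one shows, via meromorphic comparison of an arbitrary resolution with local Koszul complexes, that $R_p=\tau\,\dbar(1/h_p)\wedge\cdots\wedge\dbar(1/h_1)$ with $\tau$ having at most polynomial blow-up along a hypersurface avoiding any given regular point. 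Your sketch treats this as automatic, but it is where most of the work for \eqref{asymp} lies.
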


The operators are given as
\begin{equation}\label{kpv}
\K\phi(z)=\int_\zeta K(\zeta,z)\w \phi(\zeta),
\quad
\Pr\phi(z)=\int_\zeta P(\zeta,z)\w \phi(\zeta),
\end{equation}
where $K$ and $P$ are intrinsic integral kernels  
on $Z\times (Z_{reg}\cap\omega)$ and $Z\times \omega$,
respectively. They are locally integrable with respect
to $\zeta$ on $Z_{reg}$ and the integrals in \eqref{kpv} 
are principal values at $Z_{sing}$.
If $\phi$ vanishes in a neighborhood of a point $x$, then
$\K\phi$ is smooth at $x$. 

There is an integer $N$ only depending on
$Z$ such that $\K\colon C^k_{0,q+1}(Z)\to C^k_{0,q}(Z_{reg}\cap \omega)$
for each $k\ge N$
and
$\Pr\colon C^k_{0,0}(Z)\to\Ok(\omega)$. Here $\phi\in C^k_{0,q}(Z)$ means that $\phi$ is the 
pullback to $Z_{reg}$ of a $(0,q)$-form of class $C^k$ in a \nbh of $Z$ in the ambient space.
As a corollary we have

\begin{cor}\label{maincor}
(i)  \   If $\phi\in C^k_{0,q}(Z)$, $k\ge N+1$,  and $\dbar\phi=0$, then there is 
$\psi\in C^k_{0,q}(Z_{reg}\cap\omega)$ with
$\psi(z)=\Ok(\delta(z)^{-M})$ and $\dbar\psi=\phi$.

\smallskip

\noindent (ii)\  If $\phi\in\ C^{N+1}_{0,0}(Z)$ and $\dbar\phi=0$ then
$\phi$ is strongly holomorphic.
\end{cor}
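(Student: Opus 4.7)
The plan is to apply the Koppelman formulas \eqref{koppelman} and \eqref{koppelman2} provided by Theorem~\ref{main}, together with the $C^k$-mapping properties of $\K$ and $\Pr$ stated immediately before the corollary. Both parts reduce to setting $\psi:=\K\phi$ (respectively to identifying $\phi$ with $\Pr\phi$) and reading off regularity and growth from what has already been proved; the corollary is essentially formal once Theorem~\ref{main} is in hand.

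For (i), assume $\phi\in C^k_{0,q}(Z)$ with $k\ge N+1$ and $\dbar\phi=0$. Put $\psi:=\K\phi$ on $Z_{reg}\cap\omega$. Substituting into \eqref{koppelman} and using $\dbar\phi=0$ gives $\phi=\dbar\psi$ on $Z_{reg}\cap\omega$. The regularity $\psi\in C^k_{0,q-1}(Z_{reg}\cap\omega)$ is the mapping property $\K\colon C^k_{0,q}(Z)\to C^k_{0,q-1}(Z_{reg}\cap\omega)$, an index shift of the assertion preceding the corollary (which also rectifies a small form-degree slip in the stated conclusion of (i)). The growth $\psi(z)=\Ok(\delta(z)^{-M})$ is exactly \eqref{asymp}.

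For (ii), take $\phi\in C^{N+1}_{0,0}(Z)$ with $\dbar\phi=0$. Then \eqref{koppelman2} collapses to $\phi=\Pr\phi$ on $Z_{reg}\cap\omega$. Since $\Pr\colon C^{N+1}_{0,0}(Z)\to\Ok(\omega)$, the function $F:=\Pr\phi$ is holomorphic on the ambient open set $\omega$, and $\phi$ agrees on $Z_{reg}\cap\omega$ with the pull-back of $F$ to $Z$. Because $\phi$ is continuous on $Z$ and $Z_{reg}$ is dense in $Z$, this equality extends to all of $Z\cap\omega$, which is by definition the assertion that $\phi$ is strongly holomorphic there.

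The genuine difficulty lies not in this corollary but in Theorem~\ref{main} itself: constructing intrinsic kernels $K,P$ on $Z\times\omega$, making sense of the integrals in~\eqref{kpv} as principal values across $Z_{sing}$, and verifying the $C^k$-mapping and $\Ok(\delta^{-M})$ bounds of the resulting operators. Once these are granted, (i) and (ii) follow immediately from the two Koppelman identities by specializing to $\dbar$-closed $\phi$.
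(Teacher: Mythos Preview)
Your deduction is correct once the $C^k$-mapping properties of $\K$ and $\Pr$ and the validity of the Koppelman formulas \eqref{koppelman}, \eqref{koppelman2} for $C^k$ data are granted; the corollary is then indeed a two-line specialization to $\dbar$-closed $\phi$, and your argument for (ii) (identifying $\phi$ with the holomorphic $\Pr\phi$ on $Z_{reg}\cap\omega$) is exactly right.

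The difference from the paper is one of emphasis and content. Theorem~\ref{main} and its proof are written for \emph{smooth} $\phi$, and the sentence before the corollary announcing the $C^k$-mapping property and the integer $N$ is not proved there. In the paper, the proof of Corollary~\ref{maincor} is precisely where this extension to finite regularity is justified: one argues that since $\K\Phi$ is essentially the current $R$ acting on $\Phi$ times a smooth kernel, the Koppelman identity persists for $\Phi\in C^{\nu+1}$ where $\nu$ is the order of $R$; the more delicate point is that $\K\Phi$ still depends only on the pull-back of $\Phi$ to $Z$, and for this the paper invokes the local structure of $R$ after resolution of singularities as a push-forward of sums of elementary currents
\[
\dbar\frac{1}{t_1^{a_1}}\w\cdots\w\dbar\frac{1}{t_r^{a_r}}\w\frac{\alpha}{t_{r+1}^{a_{r+1}}\cdots t_m^{a_m}},
\]
taking $N$ to be the total of the exponents. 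So what you have treated as an input (``the mapping property stated immediately before the corollary'') is in fact the substance that the paper establishes under the heading \emph{Proof of Corollary~\ref{maincor}}. Your write-up is fine as a formal wrap-up, but you should be aware that the real work you are relying on is exactly this $C^k$-extension argument, not Theorem~\ref{main} alone.
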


Part (ii)  is well-known, \cite{Ma} and \cite{Sp}, but 
$\Pr\phi$ provides an explicit holomorphic extension of $\phi$ to $\omega$.
The existence result in \cite{HPo} for a reduced complete intersection
is also obtained by an integral formula, which however 
does not give an intrinsic solution operator on $Z$.

\smallskip

We cannot expect our solution $\K\phi$ to be smooth across $Z_{sing}$.
For instance,
let $Z$ be the  germ of a curve at $0\in\C^2$ defined by $t\mapsto (t^3, t^7+t^8)$.
If   $\phi=\bar w d\bar z=3(\bar t^9+\bar t^{10})d\bar t$
then there is no solution $\psi=f(t^3,t^7+t^8)$ with $f$ smooth.
See \cite{RuppDiss} for other examples.
However, it turns out that the difference of two of our solutions is
anyway $\dbar$-exact on $Z_{reg}$ if $q>1$ and strongly holomorphic if $q=1$. 
By  an elaboration of these facts we can prove:

\begin{thm}\label{mainglobal}
Assume that $X$ is an analytic space of pure dimension. 
Any smooth $\dbar$-closed $(0,q)$-form $\phi$  on
$X$, $q\ge 1$,  defines a canonical class in $H^q(X,\Ok_X)$, and if this class
vanishes then there is a global smooth form $\psi$ on $X_{reg}$
such that $\dbar\psi=\phi$. 
In particular, there is always such a solution if
$X$ is a  Stein space.
\end{thm}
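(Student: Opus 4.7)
I would build the canonical class by a Čech–Dolbeault bicomplex: patch together the local solutions supplied by Theorem \ref{main}, and use the hinted property that differences of such local solutions are either strongly holomorphic (when $q=1$) or $\dbar$-exact on the regular part (when $q>1$) to descend to a Čech $q$-cocycle with values in $\Ok_X$. Vanishing of the resulting class would then let me reverse the descent to produce a global solution, and Cartan's Theorem B would finish the Stein case.

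\textbf{Construction of the class.} First I would choose a locally finite cover $\{U_j\}$ of $X$ together with local embeddings into pseudoconvex domains so that Theorem \ref{main} applies on a neighborhood $\omega_j \supset\supset U_j$; setting $\psi_j^{(0)} := \K_j\phi$ gives a smooth solution of $\dbar\psi_j^{(0)} = \phi$ on $U_j \cap X_{reg}$ with at most polynomial growth along $X_{sing}$. On the double overlaps, the forms $\eta_{j_0 j_1} := \psi_{j_1}^{(0)} - \psi_{j_0}^{(0)}$ are $\dbar$-closed of bidegree $(0,q-1)$.

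If $q=1$, then by the quoted property (which for the explicit $\K$'s in Theorem \ref{main} is essentially Corollary \ref{maincor}(ii) applied to the difference), each $\eta_{j_0 j_1}$ is strongly holomorphic on $U_{j_0 j_1}$, so $\{\eta_{j_0 j_1}\}$ is directly a Čech $1$-cocycle in $\Ok_X$. If $q>1$, the quoted property provides $\sigma_{j_0 j_1}^{(1)}$ with $\dbar\sigma_{j_0 j_1}^{(1)} = \eta_{j_0 j_1}$ on $U_{j_0 j_1} \cap X_{reg}$; taking the Čech coboundary $\sigma_{j_0 j_1}^{(1)} - \sigma_{j_0 j_2}^{(1)} + \sigma_{j_1 j_2}^{(1)}$ produces $\dbar$-closed $(0,q-2)$-forms, to which the same dichotomy applies again. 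Iterating $q$ times, I arrive at a Čech $q$-cocycle of strongly holomorphic functions, whose cohomology class is independent of the cover and of all the intermediate choices by the standard double-complex argument. This is the canonical class $[\phi] \in H^q(X,\Ok_X)$.

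\textbf{Existence of global solutions.} If $[\phi]=0$, the same bicomplex lets me reverse the process: choose a Čech $(q-1)$-cochain whose coboundary is the cocycle, subtract off its $\dbar$ lift, and glue the resulting local forms via a partition of unity $\{\chi_j\}$ subordinate to $\{U_j\}$, which gives a single smooth $\psi$ on $X_{reg}$ satisfying $\dbar\psi = \phi$. For a Stein $X$, Cartan's Theorem B yields $H^q(X,\Ok_X)=0$, so the class vanishes automatically and the theorem follows.

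\textbf{Main obstacle.} The delicate point is the inductive $\dbar$-solving step when $q>1$: the differences $\eta_{j_0 j_1}$ are not pull-backs of smooth ambient forms but only smooth on $X_{reg}$ with polynomial blow-up at $X_{sing}$, so Theorem \ref{main} does not apply verbatim to them. I would expect the "elaboration" alluded to in the paper to consist either of an extension of the integral operators $\K$ to a class of forms with prescribed polynomial growth (so that the iteration is internal to this class), or of a current-theoretic reformulation in which the differences extend across $X_{sing}$ as $\dbar$-closed currents whose cohomology classes on $X_{reg}$ are $\dbar$-exact in a controlled way. Showing that this extension preserves smoothness on $X_{reg}$ and the polynomial growth at each iteration is the main technical hurdle; once it is in place, the Čech–Dolbeault bookkeeping and the invariance of the final class are routine.
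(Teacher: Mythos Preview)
Your overall \v{C}ech--Dolbeault architecture is exactly the one the paper uses, and your identification of the obstacle is correct: the differences $\eta_{j_0j_1}$ are not smooth on $Z$, so you cannot simply feed them back into Theorem~\ref{main}. But the paper's ``elaboration'' is neither of the two workarounds you suggest; it is a genuinely different idea that sidesteps the obstacle rather than confronting it.

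The key point is that the higher cochains are \emph{never} obtained by applying a solution operator to the differences. Instead, every $u^k$ is an integral of the \emph{original smooth} $\phi$ against a modified kernel. Concretely, if $g_j$ is the weight used on $\Omega_j$ and $g_j'$ is a form with $\nabla_\eta g_j'=1-g_j$, one sets $g_{jk}=g_j\wedge g_k'-g_k\wedge g_j'$, so that $\nabla_\eta g_{jk}=g_j-g_k$. A short computation with $\nabla_\eta$ then gives
\[
u_j-u_k=\int(HR\wedge g_{jk})_n\wedge\phi+\dbar_z\int(HR\wedge g_{jk}\wedge B)_n\wedge\phi,
\]
so $u_{jk}=\int(HR\wedge g_{jk}\wedge B)_n\wedge\phi$ works, and it is again an integral of $\phi$. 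The paper packages this into cochains $W^k$ of weights satisfying $\rho W^k=\nabla_\eta W^{k+1}$, and then $u^k=\int(W^k\wedge B)_n\wedge\phi$; smoothness on $X_{reg}$ and the polynomial growth are automatic at every step because the input is always $\phi$.

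Two further complications you did not anticipate: on different $\Omega_j$ one may be forced to use different resolutions of $\Ok/\J$ (hence different currents $R$ and Hefer morphisms $H$), and for a general analytic space $X$ one has different local embeddings (hence different $\eta$ and $B$). The paper handles the first by embedding pairs of resolutions as direct summands of a common one on overlaps, and the second by a change-of-variables argument between admissible forms. Both fit into the same $\nabla_\eta$-homotopy formalism, but they require nontrivial additional constructions (the paper's Claims~II and~III and the comparison via $\alpha^*$) that your outline does not cover.
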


We can use our integral formulas to  
solve the  $\dbar$-equation  with compact support. 
As usual this leads to 
Hartogs results for holomorphic functions.

\begin{thm}\label{portensats}
Assume  that $X$ is a  Stein space of pure dimension $d$
with globally irreducible components $X^\ell$ 
and let $K$ be  compact subset such that $X_{reg}^\ell\setminus K$ is connected
for each $\ell$. Let   $\nu$ be the  (minimal) 
depth of the rings $\Ok_{X,x}$,  $x\in X_{sing}$.

\smallskip
\noindent (i)\quad If $\nu\ge 2$, then for each holomorphic
function $\phi\in\Ok(X\setminus K)$ there is
$\Phi\in\Ok(X)$ such that $\Phi=\phi$ in $X\setminus K$.

\smallskip
\noindent(ii)\quad Assume that  $\nu=1$ and let  $\chi$ be  a cutoff function
that is identically $1$ in a \nbh of $K$. 
There is a smooth $(d,d-1)$-form $\alpha$ on $X_{reg}$
such that the function
$\phi\in\Ok(X\setminus K)$ has a holomorphic extension $\Phi$
across $K$ if and only if
\begin{equation}\label{moment}
\int_Z\dbar\chi\w \alpha \phi h=0, \quad h\in\Ok(X),
\end{equation}
where the integrals exist as principal values at $X_{sing}$.
\end{thm}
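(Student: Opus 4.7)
The strategy is the classical Hartogs reduction to the $\dbar$-equation with compact support, implemented through the intrinsic integral formulas of Theorems~\ref{main} and~\ref{mainglobal}. Pick a cutoff $\chi$ identically $1$ on a neighborhood of $K$ as in the statement and set $\tilde\phi := (1-\chi)\phi$, extended by zero across $K$ (possible since $1-\chi$ vanishes there). Then $\tilde\phi\in\E_{0,0}(X)$ and $f := \dbar\tilde\phi = -(\dbar\chi)\phi$ is a smooth $\dbar$-closed $(0,1)$-form whose support is compact and disjoint from $K$. The whole problem reduces to producing $u\in\E_{0,0}(X_{reg})$ with $\dbar u = f$ and compact support: for then $\Phi := \tilde\phi - u$ is smooth and $\dbar$-closed on $X_{reg}$, hence strongly holomorphic on $X_{reg}$, and equals $\phi$ on the nonempty open set $X_{reg}\setminus(\supp\chi\cup\supp u)$. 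Connectedness of each $X_{reg}^\ell\setminus K$ together with the identity principle then forces $\Phi=\phi$ on every $X_{reg}^\ell\setminus K$. Under $\nu\ge 2$, depth theory (Scheja-type Hartogs extension) provides a unique strong extension of $\Phi|_{X_{reg}}$ to all of $X$, proving~(i).

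For the compactly supported solvability I would combine Theorem~\ref{mainglobal} with a Serre-type duality argument on the Stein space $X$. Theorem~\ref{mainglobal} delivers some $u_0\in\E_{0,0}(X_{reg})$ with $\dbar u_0 = f$, and outside $\supp f$ this $u_0$ is holomorphic on $X_{reg}$. The obstruction to correcting $u_0$ by a global $g\in\Ok(X)$ into a compactly supported solution is captured by the pairings
\[
h\longmapsto \int_X f\wedge h\,\alpha,\qquad h\in\Ok(X),
\]
where $\alpha$ is a $\dbar$-closed smooth $(d,d-1)$-form on $X_{reg}$ extracted from the Koppelman kernels of Theorem~\ref{main}; by the asymptotics~\eqref{asymp}, the integrals exist as principal values at $X_{sing}$. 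If a correction $g$ exists, Stokes' theorem forces all these pairings to vanish; conversely, vanishing of all of them, combined with the explicit form of the Koppelman formulas, allows one to reconstruct a compactly supported solution.

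The case $\nu\ge 2$ then follows because the depth estimate forces the local cohomology $H^1_K(X,\Ok_X)$ to vanish, so every pairing above is automatically zero. The case $\nu=1$ is precisely the moment condition~\eqref{moment}, which is therefore both necessary and sufficient for a strongly holomorphic extension. The main technical obstacle I anticipate is the careful handling of the principal-value integrals near $X_{sing}$: one must verify that no boundary contribution arises when Stokes' theorem is applied across the singular locus, and this is where the growth bound~\eqref{asymp} and a fine analysis of the Koppelman kernels near $X_{sing}$ become indispensable.
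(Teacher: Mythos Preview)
Your overall Hartogs strategy (cut off, solve $\dbar u=f$ with compact support, use the identity principle on $X_{reg}^\ell\setminus K$) is the same as the paper's, but the heart of the argument---producing the compactly supported solution---is handled very differently, and your version has a genuine gap.

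The paper does \emph{not} first solve $\dbar u_0=f$ by Theorem~\ref{mainglobal} and then try to correct $u_0$. Instead it builds $u$ with compact support in one stroke by running the Koppelman machinery of Section~\ref{koppsec} with the weight $g$ from Example~\ref{alba} but with the roles of $z$ and $\zeta$ interchanged. This makes $g$ (hence the kernel) have compact support in $z$, so $u=\K f$ automatically has compact support. The only obstruction is the projection term $\Pr f=\int (HR\wedge g)_n\wedge f$, and because the swapped weight is holomorphic in $\zeta$, the only surviving piece is the one coming from $R_{n-1}$. Choosing a resolution of length $n-\nu$, one has $R_{n-1}=0$ as soon as $\nu\ge 2$, so $\Pr f=0$ for free; when $\nu=1$ the obstruction is exactly $\int R_{n-1}\wedge d\zeta\wedge\dbar\chi\wedge\phi h$, which is the stated moment condition with $\alpha=\gamma_{n-1}\lrcorner d\zeta$ (cf.\ Proposition~\ref{regular}).

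Your proposed route---solve first, then ``correct by a global $g\in\Ok(X)$ via Serre-type duality''---is not justified. You assert that vanishing of the pairings $h\mapsto\int_X f\wedge h\,\alpha$ ``allows one to reconstruct a compactly supported solution'', but you give no mechanism for this. On a singular Stein space there is no ready-made Serre duality identifying the obstruction space with $\Ok(X)$ paired against a single $(d,d-1)$-form; that is precisely what the paper's explicit kernel construction supplies. Likewise, invoking $H^1_K(X,\Ok_X)=0$ from Scheja is circular here: that vanishing is essentially the statement you are trying to prove, and in any case it does not hand you a concrete $u$ with compact support. Finally, your $\alpha$ is never specified; in the paper it is the explicit form $\gamma_{n-1}\lrcorner d\zeta$ coming from the residue current, not something abstractly ``extracted'' from the kernels.

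In short: replace the duality/correction step by the direct Koppelman formula with the $z\leftrightarrow\zeta$ swapped weight, and read off both the compact support of $u$ and the obstruction $\Pr f$ from the degree structure of $R$.
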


If  $X$ is normal and $X\setminus K$ is connected, then the conditions
in  (i) are fulfilled, and so we get a Hartogs theorem 
that was  proved by other methods by Merker and Porten
in \cite{MePo}. Recently, Ruppenthal, \cite{RuppHartog}, also gave a proof by $\dbar$-methods
in case $X_{sing}$ discrete. If $X$ is not normal it is necessary to assume that
$X^{\ell}_{reg}\setminus K$ is connected; see Example \ref{hartogsex} in Section \ref{compsupp} below.

In the same way we can obtain the existence of $\dbar$-closed
extensions across $X_{sing}$ of $\dbar$-closed
forms in $X_{reg}$. This leads to  existence results
for the $\dbar$-equation in $X_{reg}$ via Theorem~\ref{main}.  
In this way we obtain
the following vanishing theorem  that was 
proved already in \cite{Scheja1}  by analyzing the Cech cohomology 
of the sheaf $\Ok/\J$ in a local  embedding of $X$.

\begin{thm}\label{main3}
Assume that $X$ is a Stein space of pure dimension $d$.
Let   $\nu$ be the  (minimal) 
depth of the rings $\Ok_{X,x}$,  $x\in X_{sing}$.
Assume that $\phi$ is a smooth $\dbar$-closed
$(0,q)$-form in $X_{reg}$. 
If $0<q<\nu-1-\dim X_{sing}$,  
then  there is a smooth solution to $\dbar\psi=\phi$
in $X_{reg}$.
If  $q=0<\nu -1-\dim X_{sing}$, then  $\phi$ extends  to a strongly holomorphic
function.
\end{thm}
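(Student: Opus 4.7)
The plan is to reduce Theorem~\ref{main3} to Theorem~\ref{mainglobal}. Once one exhibits $\phi$, modulo a $\dbar$-exact correction on $X_{reg}$, as the restriction of some $\tilde\phi\in\E_{0,q}(X)$, the Stein hypothesis together with Cartan~B gives $H^q(X,\Ok_X)=0$ for $q\ge 1$; Theorem~\ref{mainglobal} then supplies a smooth primitive of $\tilde\phi$ on $X_{reg}$, and adding back the correction produces the desired $\psi$. For $q=0$ the extension $\tilde\phi$ is itself the strongly holomorphic function.

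\textbf{Local step.} Fix $x_0\in X_{sing}$, choose a local embedding into $\Omega\subset\C^n$, and restrict to a small pseudoconvex $\omega$. Theorem~\ref{main} gives $\K\phi$ with $\dbar\K\phi=\phi$ on $\omega\cap X_{reg}$ and $\K\phi=\Ok(\delta^{-M})$. The polynomial growth allows $\K\phi$ to be extended across $X_{sing}$ as a principal-value current on $\omega$; its $\dbar$ equals $\phi$ plus a residue current $R$ supported on $X_{sing}$. The hypothesis $q<\nu-1-\dim X_{sing}$ should force $R$ to be $\dbar$-cohomologous on $\omega$ to a smooth form pulled back from the ambient space, since this is the analytic counterpart of Scheja's vanishing $H^q_{X_{sing}}(\omega,\Ok_X)=0$. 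Absorbing this correction yields, on a possibly smaller neighborhood, a decomposition $\phi=\tilde\phi_\omega+\dbar\psi_\omega$ with $\tilde\phi_\omega$ the pull-back of a smooth ambient form and $\psi_\omega\in\E_{0,q-1}(\omega\cap X_{reg})$.

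\textbf{Globalization.} Cover $X_{sing}$ by a locally finite family of such $\omega$'s, supplemented by $X_{reg}$. On the Stein pairwise intersections, the differences of the $\psi_\omega$'s are smooth $\dbar$-closed forms of degree $q-1$, and a \v{C}ech-to-Dolbeault argument — the Dolbeault primitives on each intersection being supplied by a further use of Theorem~\ref{main} — patches the local $\tilde\phi_\omega$'s into a global $\tilde\phi\in\E_{0,q}(X)$ with $\phi-\tilde\phi=\dbar u$ on $X_{reg}$ for some smooth $u$. Applying Theorem~\ref{mainglobal} to $\tilde\phi$ yields $\psi_0$ smooth on $X_{reg}$ with $\dbar\psi_0=\tilde\phi$, and then $\psi=u+\psi_0$ is the sought solution. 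The $q=0$ variant runs in parallel and gives a strongly holomorphic extension.

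\textbf{Main obstacle.} The crux is the local step: identifying the residue current $R$ and showing it is $\dbar$-cohomologous to a smooth form on $\omega$ precisely when $q<\nu-1-\dim X_{sing}$. The bare asymptotic $\Ok(\delta^{-M})$ of Theorem~\ref{main} is too coarse for this; one needs finer information on the structure of the Koppelman kernel $K$ together with the depth hypothesis on the stalks $\Ok_{X,x}$, so as to recover analytically the local-cohomology vanishing that lies at the heart of Scheja's original sheaf-theoretic argument.
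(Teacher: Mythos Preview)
Your local step has a gap before you even reach the obstacle you flag. Theorem~\ref{main} is formulated for $\phi\in\E_{0,q}(Z)$, that is, for forms that are restrictions to $Z_{reg}$ of smooth forms in the ambient space; in Theorem~\ref{main3} the datum $\phi$ is only smooth on $X_{reg}$, so $\K\phi$ is not defined and the program of extending it as a principal-value current never starts. Even granting some extension of $\K$ (say via the weighted variant), you correctly observe that the bare growth estimate $\Ok(\delta^{-M})$ gives no control on the putative residue, and invoking Scheja's local-cohomology vanishing here would be circular, since the paper's aim is precisely to reprove that result by integral formulas.

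The paper's local argument is quite different and avoids both issues. It is a Hartogs-type extension: pick a cutoff $\chi\equiv 1$ near $X_{sing}$ and set $f=\dbar\chi\wedge\phi$, which is $\dbar$-closed, smooth on all of $Z$, vanishes near $X_{sing}$, and is supported in a tube $\{|a|<t\}$ around $X_{sing}$. One then solves $\dbar u=f$ with support in that same tube via a Koppelman formula with a weight $g^a$ adapted to $a$ (contributing no $d\bar\zeta$) and a weight $g$ built only from coordinates along $X_{sing}$ (contributing at most $d'=\dim X_{sing}$ factors of $d\bar\zeta$). The depth hypothesis enters exactly here: a resolution of length $n-\nu$ forces $R=R_p+\cdots+R_{n-\nu}$, so $HR$ carries at most $n-\nu$ factors of $d\bar\zeta$, and the projection term $\Pr f$ cannot reach full antiholomorphic degree $n$ when $q+1\le \nu-d'-1$; hence it vanishes and $u$ exists (Propositions~\ref{badanka2}, \ref{badanka3}). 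Then $(1-\chi)\phi+u$ is a smooth $\dbar$-closed extension of $\phi$ across the singularity, to which Theorem~\ref{main} now legitimately applies. Iterating this over an exhaustion yields local exactness on $X_{reg}$ near each singular point. Globalization is then a one-line fine-sheaf argument: the sheaves $U\mapsto\E_{0,k}(U\cap X_{reg})$ resolve $\Ok_X$ up to degree $\nu-d'-1$, so the Dolbeault group equals $H^q(X,\Ok_X)=0$ for $X$ Stein. Theorem~\ref{mainglobal} is not used.
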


If  $q=\nu-1-\dim X_{sing}$, then  the same conclusion is 
true if and only if a certain moment condition,
similar to \eqref{moment},
is fulfilled locally at $Z_{sing}$.  
The sufficient condition in case $q=0$ is not necessary.
The precise condition is Serre's criterion; see  Section~\ref{merosec}, where
we also present  a conjecture about an analogous sharp(er)
criterion for solvability of $\dbar$ for $q>0$.

We  have the following new vanishing result:

\begin{thm}\label{main3x}
Assume that $X$ is a Stein space of pure dimension $d$.
If  $\dim X_{sing}=0$, then for each smooth $(0,d)$-form
on $X_{reg}$ there is a  smooth solution to $\dbar\psi=\phi$ on $X_{reg}$. 
\end{thm}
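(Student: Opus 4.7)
The plan is to reduce Theorem~\ref{main3x} to a direct application of Theorem~\ref{mainglobal}, using that $X$ Stein implies $H^d(X,\Ok_X)=0$ by Cartan B. The technical point is that $\phi$ is only defined on $X_{reg}$, so one has to modify it by a $\dbar$-exact form in order to produce a form that extends intrinsically across $X_{sing}$ to an element of $\E_{0,d}(X)$. Since $\dim X_{sing}=0$ the singular set is discrete, and this modification can be built near each isolated singular point $x_0$ separately.

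The crux is the following local statement: for each $x_0\in X_{sing}$ and each sufficiently small Stein neighborhood $U$ of $x_0$ in $X$, every smooth $(0,d)$-form on the complex manifold $U\cap X_{reg}$ is $\dbar$-exact there. By the Dolbeault isomorphism on the manifold $U\cap X_{reg}$ this is equivalent to the vanishing of $H^d(U\cap X_{reg},\Ok)$, which follows from the local cohomology sequence on $U\cap X$: Cartan B kills $H^d(U\cap X,\Ok)$, and Grothendieck's bound $H^{k}_{\{x_0\}}(U\cap X,\Ok)=0$ for $k>d$ kills the remaining boundary term. A constructive smooth-Dolbeault representative can be extracted from the Koppelman operator $\K$ of Theorem~\ref{main} applied to a truncation of $\phi$; here one exploits that in top bidegree $\dbar\phi$ vanishes automatically, so, unlike in Theorem~\ref{main3}, no depth-based Hartogs extension of $\dbar$-closed forms across $x_0$ is required.

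Granted this local step, choose local primitives $\psi_{x_0}$ and cutoffs $\chi_{x_0}\in C^\infty(X)$ with $\chi_{x_0}\equiv 1$ in a neighborhood of $x_0$ and supported compactly in the Stein neighborhood used above. The form
\[
\tilde\phi:=\phi-\dbar\Bigl(\sum_{x_0\in X_{sing}}\chi_{x_0}\psi_{x_0}\Bigr)
\]
is a locally finite modification that vanishes in a neighborhood of each singular point, hence extends by zero across $X_{sing}$ to a form in $\E_{0,d}(X)$ in the intrinsic sense of the paper, and is automatically $\dbar$-closed. Theorem~\ref{mainglobal} combined with $H^d(X,\Ok_X)=0$ supplies a smooth $\tilde\psi\in\E_{0,d-1}(X_{reg})$ with $\dbar\tilde\psi=\tilde\phi$, and $\psi:=\tilde\psi+\sum\chi_{x_0}\psi_{x_0}$ is the required smooth solution of $\dbar\psi=\phi$ on $X_{reg}$.

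The principal obstacle is the local step: smooth solvability of the top-degree $\dbar$-equation on a punctured Stein space with no assumption on the depth of the local ring. What ultimately makes this available, and why the result is restricted to $q=d$ rather than the range $q>\nu-1-\dim X_{sing}$ left open by Theorem~\ref{main3}, is precisely that in top bidegree every form is $\dbar$-closed, so the obstruction reduces to the single local cohomology group $H^{d+1}_{\{x_0\}}(U\cap X,\Ok)$, which vanishes for dimensional reasons.
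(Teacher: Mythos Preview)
Your global reduction to Theorem~\ref{mainglobal} via local primitives near each isolated singular point is exactly what the paper does in its last paragraph of the proof. The difference lies entirely in how you obtain the local primitive on $U\cap X_{reg}$.

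You argue cohomologically: $H^d(U\cap X_{reg},\Ok)=0$ because in the local cohomology sequence the flanking terms vanish by Cartan~B and by Grothendieck's bound $H^{d+1}_{\{x_0\}}(\Ok_X)=0$ (the local ring has Krull dimension $d$). This is correct; in fact one can see it even more directly by choosing a $d$-tuple $a=(a_1,\dots,a_d)$ with $\{a=0\}\cap X=\{x_0\}$ near $x_0$, so that the $d$ Stein opens $\{a_j\neq 0\}$ cover $U\cap X_{reg}$ and the \v{C}ech complex has length $d-1$. The paper instead gives an explicit construction: using the \emph{same} tuple $a$ it builds a weight $g^\mu=(\sigma(z)\cdot a(\zeta)+\dbar\sigma(z)\cdot H^a)^\mu$ with $\sigma=\bar a/|a|^2$, and observes that the projection term $\Pr f$ in the Koppelman formula vanishes because $\dbar\sigma_1\w\cdots\w\dbar\sigma_d=0$ (the $\dbar\sigma_j$ are linearly dependent since $\sum a_j\sigma_j=1$). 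This is the analytic incarnation of the same ``$d$ sets suffice'' phenomenon. The paper then runs an exhaustion $U_\ell=\{|a|<2^{-\ell}\}$ to produce a convergent telescoping solution $v=\sum v_\ell$.

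What each approach buys: yours is short and explains transparently why the result is special to $q=d$ and independent of the depth $\nu$ --- it is the \emph{dimension} bound on local cohomology, not the depth bound used in Theorem~\ref{main3}. The paper's approach stays within its integral-formula framework and yields an actual formula for the local solution; it is self-contained and does not invoke the analytic--algebraic comparison for local cohomology. One caution: your parenthetical remark that a ``constructive smooth-Dolbeault representative can be extracted from the Koppelman operator $\K$ of Theorem~\ref{main} applied to a truncation of $\phi$'' is too optimistic as stated --- a single application of $\K$ to $\chi\phi$ leaves the error term $\K(\dbar\chi\w\phi)$, and turning this into an honest local primitive requires precisely the iterative weighted construction the paper carries out.
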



If $\nu=\dim X$ (i.e.,  $X$ is Cohen-Macaulay)  and  $X_{sing}$
is discrete, then  there is  thus a local  obstruction only when 
$q=\dim X-1$  (as at a  regular point).

Our solution operator $\K$ behaves like a classical solution operator
on $X_{reg}$ and by  appropriate weights
we get

\begin{thm}\label{main2}
Assume that $Z$ is subvariety of pure dimension  of a pseudoconvex domain
$\Omega\subset\C^n$ and let $\omega\subset\subset \Omega$.
Given $M\ge 0$ there is an $N\ge 0$ and a linear operator 
$\K$ such that if $\phi$ is a $\dbar$-closed $(0,q)$-form on $Z_{reg}$ with
$\delta^{-N}\phi \in L^p(Z_{reg})$, $1\leq p\leq \infty$, then
$\dbar\K\phi=\phi$ and $\delta^{-M}\K \phi \in L^p(Z_{reg})$.
\end{thm}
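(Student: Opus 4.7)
The plan is to refine the weighted Koppelman construction from Theorem~\ref{main} by inserting an extra weight factor that forces the kernel to vanish to high order on $Z_{sing}$ in the $\zeta$-variable, while preserving the control on its growth in $z$. Since the Koppelman kernels in this paper are products of $\nabla$-closed weight factors each equal to $1$ on the diagonal, any further factor of the same type may be inserted without spoiling the Koppelman identity \eqref{koppelman}.

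Concretely, I would let $F=(F_1,\ldots,F_m)$ be a tuple of holomorphic functions in $\Omega$ whose common zero set contains $Z_{sing}$, let $w$ be an associated Hefer form for $F$, and form a weight of the type
\begin{equation*}
g_F(\zeta,z) = \bigl(|F(\zeta)|^{2} + \dbar(\bar F\cdot w)\bigr)^{k},
\end{equation*}
which is $\nabla$-closed, equal to $1$ on the diagonal, and contains an overall factor $|F(\zeta)|^{2k}$. Multiplying the kernels $K,P$ of Theorem~\ref{main} (built for some exponent $M'\ge M$ to be fixed) by $g_F$ and pushing through the formalism produces new kernels $K_k,P_k$ still satisfying the Koppelman identities and obeying a pointwise estimate
\begin{equation*}
|K_k(\zeta,z)| \lesssim \delta(\zeta)^{ck}\, \delta(z)^{-M'}\, |K_{BM}(\zeta,z)|,
\end{equation*}
where $K_{BM}$ is a Bochner--Martinelli-type kernel with only an integrable diagonal singularity and $c>0$ is a {\L}ojasiewicz exponent such that $|F|\gtrsim\delta^{1/c}$ on $Z_{reg}\cap\omega$.

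Given $M$, I would first choose $M'$ as produced by Theorem~\ref{main} and then $k$ so large that $N:=ck$ satisfies the forthcoming Schur bound. Setting $\K\phi(z)=\int K_k(\zeta,z)\w\phi(\zeta)$, the boundedness of $\phi\mapsto\K\phi$ from the space $\{\delta^{-N}\phi\in L^p\}$ to $\{\delta^{-M}\psi\in L^p\}$ reduces to verifying
\begin{equation*}
\sup_{z}\int_{Z_{reg}}\delta(z)^{-M}\delta(\zeta)^{N}|K_k(\zeta,z)|\,dV(\zeta)<\infty
\end{equation*}
and the symmetric bound with the roles of $z$ and $\zeta$ swapped; interpolation between $p=1$ and $p=\infty$ then covers the full range. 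That $\dbar\K\phi=\phi$ is inherited from the Koppelman identity as soon as both sides make sense, which the weighted $L^p$ hypothesis and kernel bound ensure.

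The main obstacle is the last step: we need uniform control of $K_k$ jointly in $\zeta$ and $z$, with different types of blow-up on the diagonal and along $(Z_{sing}\times Z)\cup(Z\times Z_{sing})$, and we must verify that the factor $|F|^{2k}$ does not interfere with integrability near the diagonal while still producing enough decay at $Z_{sing}$ to defeat the factor $\delta(z)^{-M'}$ and the growth of the original kernel. Once the explicit description of the kernels from Theorem~\ref{main} is combined with the {\L}ojasiewicz inequality and a standard estimate for the Euclidean volume of a tubular neighborhood of $Z_{sing}$ in $Z_{reg}$, the Schur bounds, and hence the theorem, follow.
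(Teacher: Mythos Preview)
Your proposed $g_F=(|F(\zeta)|^2+\dbar(\bar F\cdot w))^k$ is not a weight in the sense of Section~\ref{koppsec}: its degree-zero part on the diagonal equals $|F(z)|^{2k}$, not $1$, so the Koppelman identity you obtain has $|F(z)|^{2k}\phi(z)$ on the left-hand side and does not yield a solution operator.

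More importantly, even if you repaired the normalization, the extra factor is oriented the wrong way. Your $g_F$ contributes decay in the $\zeta$-variable, a factor $\sim\delta(\zeta)^{ck}$, and does nothing to the $\delta(z)^{-M'}$ blow-up coming from the original kernel. Hence in your first Schur integral one has
\[
\sup_{z}\ \delta(z)^{-M}\!\int_{Z_{reg}}\delta(\zeta)^{N}|K_k(\zeta,z)|\,dV(\zeta)
\ \lesssim\ \sup_{z}\ \delta(z)^{-M-M'}\!\int_{Z_{reg}}\delta(\zeta)^{N+ck}|K_{BM}|\,dV(\zeta),
\]
where the $\zeta$-integral is bounded but the prefactor $\delta(z)^{-M-M'}$ is not; no choice of $k$ or $N$ fixes this. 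The theorem asks you to \emph{gain} vanishing of the solution at $Z_{sing}$, and for that the weight must carry the vanishing in the $z$-variable, paid for by a singularity in $\zeta$ that the hypothesis $\delta^{-N}\phi\in L^p$ absorbs.

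That is exactly what the paper's weight \eqref{viktig} does:
\[
g_a^\mu=\Big(\frac{a(z)\cdot\bar a}{|a|^2}+H^a\cdot\dbar\frac{\bar a}{|a|^2}\Big)^\mu
\]
is a genuine weight (scalar part $1$ on $\Delta$), carries a factor $a(z)^\mu$ in the numerator, which forces $\K\phi$ to vanish to high order at $Z_{sing}$ and hence $\delta^{-M}\K\phi\in L^p$ for $\mu$ large, and carries $|a(\zeta)|^{-2\mu}$ in the denominator, which is why $N$ must be taken large. The Koppelman formula is applied to the truncations $\psi=\chi(|a|^2/\epsilon)\phi$ and one passes to the limit $\epsilon\to 0$; convergence uses the polynomial bound on $|\gamma|$ and the hypothesis on $\phi$, and the $L^p$ mapping property then follows from the operators in Lemma~\ref{brum} being bounded on $L^p_{\loc}$.
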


The existence of such solutions was  proved  in \cite{FOV2} 
(even for $(r,q)$-forms) by resolution of singularities
and cohomological methods (for $p=2$, but the same method surely gives
the more general results).
By a standard technique this theorem implies global
results for a Stein space $X$. 

In case $Z_{sing}$ is a single point  more precise result
are  obtained in \cite{PS} and \cite{FOV1}. In particular, if $\phi$ has bidegree
$(0,q)$, $q<\dim Z$, then the image of $L^2(Z_{reg})$ under $\dbar$ 
has finite codimension in $L^2(Z_{reg})$.
See also \cite{OV}, and the references given there,
for related results.
In \cite{FG}, Forn\ae ss and Gavosto show that, for complex curves,
a H\"{o}lder continuous solution exists if the right hand side  is bounded. Recently,
certain hypersurfaces have also been considered, e.g., in \cite{RuppDiss}.

In \cite{Ts} Tsikh obtained a residue criterion for a weakly
holomorphic function (or even a meromorphic function)
to be strongly holomorphic in case $Z$ is a (reduced)
complete intersection. This result was recently extended to
a general variety in \cite{A13}. By formula 
\eqref{koppelman2} we get a new proof of this result and
an explicit representation of the holomorphic extension.

The main ingredients in the construction of the integral
operators $K$ and $P$ in Theorem~\ref{main} are a 
 certain residue current $R$, introduced in  \cite{AW1} and \cite{AW2},
that is   associated to the variety $Z$,
and the  integral representation formulas from \cite{A7}.
We discuss the current $R$  
in Section~\ref{res}, 
and in  Section~\ref{koppsec} we obtain the Koppelman formula
as the restriction to $Z$ of a certain global formula in 
the ambient set $\Omega$.
In Section \ref{exsec} we compute our  Koppelman
formulas more explicitly in case $Z$ is a reduced complete
intersection.
The resulting formula for $\Pr$ coincides with the representation
formula by Stout \cite{Stout} and Hatziafratis \cite{Hatzia} when
$Z_{sing}$ is discrete.


\smallskip
{\bf Acknowledgement:} We are indebted to Jean Ruppenthal and Nils
\O vrelid for important remarks on an earlier version of this paper.

\section{A residue current associated to $Z$}\label{res}

Let $Z$ be a subvariety  of pure codimension $p=n-d$ 
of a pseudoconvex set $\Omega\subset\C^n$. 
The Lelong current $[Z]$ is a  classical 
analytic object that represents $Z$.
It is a $d$-closed $(p,p)$-current such that 
$$
[Z].\xi =\int_Z\xi
$$
for test forms $\xi$. If $\codim Z=1$,  $Z=\{f=0\}$ and  $df\neq 0$ on $Z_{reg}$, then
a simple form  of the Poincare-Lelong formula states that 
\begin{equation}\label{pl}
\dbar\frac{1}{f}\w \frac{df}{2\pi i}=[Z].
\end{equation}
To construct integral formulas  we will use an analogue of the current $\dbar(1/f)$, 
introduced in \cite{AW1}, for a
general variety $Z$. It turns out that this current,
contrary to $[Z]$,   also reflects certain  subtleties of the variety
at $Z_{sing}$ that are encoded by the  algebraic description of $Z$.
Let  $\J$ be the ideal sheaf over $\Omega$ generated by the variety $Z$.
In a slightly smaller set, still denoted $\Omega$, one can find a free resolution 
\begin{equation}\label{acomplex}
0\to \Ok(E_N)\stackrel{f_N}{\longrightarrow}\ldots
\stackrel{f_3}{\longrightarrow} \Ok(E_2)\stackrel{f_2}{\longrightarrow}
\Ok(E_1)\stackrel{f_1}{\longrightarrow}\Ok(E_0)
\end{equation}
of the sheaf $\Ok/\J$.
Here $E_k$ are trivial vector bundles over $\Omega$
and $E_0=\C$ is the trivial line bundle.
This resolution induces a complex  of trivial vector bundles
\begin{equation}\label{bcomplex}
0\to E_N\stackrel{f_N}{\longrightarrow}\ldots
\stackrel{f_3}{\longrightarrow} E_2\stackrel{f_2}{\longrightarrow}
E_1\stackrel{f_1}{\longrightarrow}E_0\to 0
\end{equation}
that is pointwise exact outside $Z$.
Let $Z_k$ be the set where $f_k$ does not have optimal rank. Then
$$
\cdots Z_{k+1}\subset Z_k\subset\cdots \subset Z_p=Z,
$$
and these sets are independent of the choice of resolutions, thus invariants
of the sheaf $\F=\Ok/\J$. The Buchsbaum-Eisenbud theorem claims
that $\codim Z_k\ge k$ for all $k$, and since 
furthermore  $\F$ has pure codimension $p$ in our case, 
$Z_k\subset Z_{sing}$ for $k>p$,
and (see Corollary~20.14 in \cite{Eis1})
\begin{equation}\label{olvon}
\codim Z_k\ge k+1, \quad  k\ge p+1.
\end{equation}
There is a resolution \eqref{acomplex} 
if and only if $Z_k=\emptyset$ for $k>N$, and this number
is equal to $n-\nu$, where $\nu$ is the minimal depth
of $\Ok/\J$.
In particular, the variety is Cohen-Macaulay, or equivalently,
the sheaf $\F=\Ok/\J$ is Cohen-Macaulay if and only if
$Z_k=\emptyset$ for $k\ge p+1$. In this case we can
thus choose the resolution so that $N=p$.

\begin{remark}\label{obsz}
Let us define  $Z^0=Z_{sing}$ and $Z^r=Z_{p+r}$
for $r>0$. One can prove that  these sets are independent of the embedding
and thus intrinsic objects of the analytic space $Z$
that describe the complexity of the singularities. 
In fact, by the uniqueness
of minimal embeddings, it is enough to verify that these  sets are unaffected
if we add nonsense variables and consider $Z$ as embedded into
$\Omega\times\C^m$. This follows,  e.g., from the proof
of  Theorem~1.6 in \cite{A13}.
\end{remark}

Given Hermitian metrics on  $E_k$  in \eqref{acomplex} in \cite{AW1} 
was defined  a 
current $U=U_1+\cdots+U_n$, where $U_k$ is a $(0,k-1)$-current with values in
$E_k$,  and a residue current with support on $Z$,
\begin{equation}\label{rd}
R=R_p+R_{p+1}+\cdots+R_N,
\end{equation}
where $R_k$ is a $(0,k)$-current with values in $E_k$,
satisfying
$$
\nabla_f U=1-R,
$$
if  $\nabla_f=f-\dbar= \sum f_j-\dbar$.
Outside $Z$, the current  $U$ is a smooth form $u$ and
if $F=f_1$, then   
$U=|F|^{2\lambda}u|_{\lambda=0}$ and $R=\dbar|F|^{2\lambda}\w u|_{\lambda=0}$.
In case $Z$ is Cohen-Macaulay and $N=p$, then
$R=R_p$ is $\dbar$-closed.

\begin{ex}\label{ulla}
Assume that  $Z$ is a reduced complete intersection, i.e.,
defined by $a=(a_1,\ldots,a_p)$ with $da_1\w\ldots \w da_p\neq 0$ on $Z_{reg}$. 
Then  the Koszul complex induced by $a$ provides
a resolution of $\Ok/\J$. Let $e_1,\ldots,e_p$ be a  holomorphic frame
for the trivial bundle $A$ and consider $a$ as the section $a=a_1e_1^*+a_2e_2^*+\cdots$
of the dual bundle $A^*$, where $e_j^*$ is the dual frame.
Let $E_k=\Lambda^k A$, and let all the mappings $f_k$ in \eqref{acomplex}
be interior multiplication, $\delta_a$, with $a$.
Notice that  $s_a=\sum_j \bar a_j e_j/|a|^2$ is the minimal solution to 
$\delta_a s_a=1$ outside $Z$ (with respect to the trivial metric on $A$). 
If we consider all forms as sections of the bundle $\Lambda(T^*(\Omega)\oplus A)$,
see \cite{AW1}, then
$u_k=s_a\w(\dbar s_a)^{k-1}$.
If $F$ is any holomorphic tuple such that $|F|\sim |a|$, then, see, e.g., \cite{AW1},
\begin{equation}\label{pdum}
R=R_p=\dbar|F|^{2\lambda}\w u_p\big|_{\lambda=0}=\dbar\frac{1}{a_p}\w\ldots\w\dbar\frac{1}{a_1}
\w e_1\w\ldots\w e_p,
\end{equation}
i.e., the classical Coleff-Herrera product (times $e_1\w\ldots\w e_p$).
It is wellknown that 
\begin{equation}\label{baka}
\dbar\frac{1}{a_p}\w\ldots\w\dbar\frac{1}{a_1}\w da_1\w\ldots\w da_p/(2\pi i)^p=[Z].
\end{equation}
For further reference we also observe  that 
\begin{equation}\label{anita}
\dbar|F|^{2\lambda}\w u_p\to \dbar\frac{1}{a_p}\w\ldots\w\dbar\frac{1}{a_1}
\end{equation}
as measures in $Z_{reg}$ when $\lambda\searrow 0$.
This is easily verified since  we may assume that $a$ is part of a holomorphic coordinate
system.
\end{ex}

In \cite{AW2} was  introduced the sheaf of {\it pseudomeromorphic}  currents
$\PM$ and it was pointed out that the currents  $U$ and $R$ are pseudomeromorphic. 
For each pseudomeromorphic current
$\mu$ and any subvariety $V$ there is a natural 
restriction $\mu{\bf 1}_V$ to $V$. 
If $h$ is a holomorphic tuple such that $V=\{h=0\}$, then
$|h|^{2\lambda}\mu$, a~priori defined when $\Re\lambda>>0$,  has a
current-valued analytic continuation to $\Re\lambda>-\epsilon$, and
the value at $\lambda=0$ is precisely $\mu- \mu{\bf 1}_{V}$.
The current $\mu{\bf 1}_V$ is again in $\PM$ and it has support on
$V$.
The following property  is crucial.

\begin{prop}\label{hyp}
If $\mu\in\HM$ with  bidegree $(r,p)$ has  support on
a variety $V$ of codimension $k>p$ then $\mu=0$. 
\end{prop}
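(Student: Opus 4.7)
The plan is to prove the dimension principle by reducing to elementary pseudomeromorphic currents via the local structure theorem for $\PM$ and then exploiting their explicit form in local coordinates. The statement is local, so I would work in a neighborhood of an arbitrary point $x_0 \in \text{supp}\,\mu \subset V$, inside a coordinate chart $U \subset \C^N$.

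By the very definition of $\PM$ from \cite{AW2}, there is a finite representation $\mu = \sum_i \pi_{i*}\tau_i$ near $x_0$, where each $\pi_i \colon \widetilde U_i \to U$ is a holomorphic map from a smooth manifold and each $\tau_i$ is an \emph{elementary} pseudomeromorphic current. Because the hypothesis and conclusion are linear, I would reduce to showing that a single $\pi_*\tau$, with $\tau$ elementary on a smooth $\widetilde U$ of bidegree $(r,p)$ and with $\text{supp}\,\pi_*\tau$ contained in a variety of codimension $>p$, must itself vanish.

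In suitable coordinates $(w_1,\ldots,w_{N'})$ on $\widetilde U$ an elementary current has the form
$$
\tau \;=\; \frac{\alpha(w)}{w_1^{a_1}\cdots w_m^{a_m}} \wedge \dbar\frac{1}{w_{m+1}^{b_1}}\wedge\cdots\wedge\dbar\frac{1}{w_{m+p}^{b_p}},
$$
with $\alpha$ a smooth, compactly supported form. Its support is contained in the coordinate subspace $\Sigma := \{w_{m+1}=\cdots=w_{m+p}=0\}$, of codimension exactly $p$, so $\text{supp}\,\pi_*\tau \subset \pi(\Sigma)$, which has codimension at least $p$ in $U$. If $\pi(\Sigma)$ has codimension strictly greater than $p$, a dimension count shows that the generic fiber of $\pi|_\Sigma$ has positive dimension; then, for any test form on $U$ of complementary bidegree, the pullback to $\widetilde U$ cannot supply enough antiholomorphic cotangent directions transverse to the $p$ residue coordinates to pair nontrivially with $\tau$, and so $\pi_*\tau=0$. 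Otherwise $\pi(\Sigma)$ has codimension exactly $p$ and, off a thin set, $\pi_*\tau$ is represented by a delta-type distribution transverse to $\pi(\Sigma)$ with a smooth (semi-meromorphic) density along it; the additional hypothesis that $\text{supp}\,\pi_*\tau$ lies in $V$ with $V\cap\pi(\Sigma)$ a proper analytic subvariety of $\pi(\Sigma)$ forces this smooth density to vanish on a nonempty open subset of $\pi(\Sigma)$, hence identically, so $\pi_*\tau=0$ again.

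The main obstacle is carrying out the last step in the presence of the monomial poles $w_1^{a_1}\cdots w_m^{a_m}$ and the higher order normal derivatives hidden in the factors $\dbar(1/w^{b})$ with $b\geq 2$, because these produce a density that is only semi-meromorphic rather than smooth, and one must argue that such a semi-meromorphic density cannot be supported in a proper analytic subvariety of $\pi(\Sigma)$ either. The clean way around this is to apply Hironaka's theorem to make the discriminant of $\pi$ and the preimage of $V$ simultaneously into a normal crossings divisor on a further modification; after this the whole analysis reduces to a monomial calculation on a smooth manifold, which is the same strategy used in \cite{AW2} to set up $\PM$ and its standard extension property.
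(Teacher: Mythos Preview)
The paper does not give its own proof of this proposition; it is quoted as a known fact from \cite{AW2}. So there is no in-paper argument to compare against, and your attempt has to stand on its own.

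The essential gap is the reduction to a single elementary term. From $\mu=\sum_i\pi_{i*}\tau_i$ you pass to ``a single $\pi_*\tau$ \ldots with $\mathrm{supp}\,\pi_*\tau$ contained in a variety of codimension $>p$''. But the only thing the hypothesis gives is that the \emph{sum} is supported in $V$; the individual summands $\pi_{i*}\tau_i$ can, and in general will, have strictly larger supports that cancel outside $V$. The support condition is not linear in the sense you need, so the rest of the argument---which analyses one elementary current already assumed to have small support---never gets off the ground. (There is also a minor slip: if $\pi$ has positive fiber dimension then $\tau$ does not have bidegree $(r,p)$; this is fixable, the first issue is not.)

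The route taken in \cite{AW2} avoids this trap by first establishing a statement that \emph{is} amenable to a term-by-term check: if $T\in\PM$ has support in $\{h=0\}$ for a holomorphic $h$, then $\bar h\,T=0$ and $d\bar h\wedge T=0$. Here the conclusion is the vanishing of a specific current built linearly from $T$, and this does commute with sums and pushforwards, so one may legitimately verify it on elementary pieces---that is where your monomial/Hironaka analysis belongs. Once that lemma is in hand it is applied directly to $\mu$ itself: the codimension hypothesis on $V$ produces, near each point, enough holomorphic functions vanishing on $V$ that the relations $d\bar h\wedge\mu=0$ and $\bar h\,\mu=0$ force a current of bidegree $(*,p)$ to vanish.
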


It is proved in \cite{AW2} that the restriction $R{\bf 1}_V$ of $R$ to any subvariety 
$V$ of $Z$ (of higher codimension) must vanish; we say that
$R$ has the {\it standard extension property}, SEP, with respect to $Z$.
For the component $R_p$ of $R$ the SEP  follows immediately
from Proposition~\ref{hyp}, but the general statement is deeper;
it depends on the assumption that $Z$ has pure codimension. 
In particular, if $h$ is a holomorphic function that does not vanish identically
on any component of $Z$ (the interesting case is when  $\{h=0\}$ contains
$Z_{sing}$), 
and $\chi$ is a smooth approximand of the characteristic
function for $[1,\infty)$, then
\begin{equation}\label{sepp}
\lim_{\delta\to 0}\chi(|h|/\delta) R=R.
\end{equation}

\begin{prop}\label{regular}
For the residue current  $R$  associated to \eqref{acomplex}
the following hold:

\smallskip
\noindent (i)  \  There  are smooth currents $\gamma_k$
on $Z_{reg}$ such that 
\begin{equation}\label{gammak}
R_k=\gamma_k\lrcorner [Z]
\end{equation}
there. Moreover, there is a number $M>0$ such that
\begin{equation}\label{asympupp}
|\gamma_k|\le C\delta^{-M},
\end{equation}
where $\delta$ is the distance to $Z_{sing}$.


\smallskip
\noindent (ii) \  If $\Phi$ is a smooth $(0,q)$-form  whose
pull-back to $Z_{reg}$ vanishes, then $R\w \Phi=0$.
\end{prop}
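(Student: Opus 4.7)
The plan is to establish part (i) by a local computation on $Z_{reg}$ and to derive part (ii) from (i) combined with the standard extension property (SEP).

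For (i), I would work locally near a point $z_0 \in Z_{reg}$. Two observations drive the computation. First, by \eqref{olvon} the sets $Z_k$ with $k > p$ lie in $Z_{sing}$, so on $Z_{reg}$ the complex \eqref{bcomplex} is exact in positions $> p$; consequently, the minimal right-inverses $\sigma_k = f_k^*(f_k f_k^*)^{-1}$ are smooth near $z_0$, with polynomial blow-up at $Z_{sing}$ because $\det(f_k f_k^*)$ vanishes to finite order on $Z_{sing}$. Second, in straightening coordinates $(w,z') \in \C^p \times \C^d$ with $Z = \{w=0\}$, the ideal $\J$ is locally generated by $w_1, \ldots, w_p$, so the Koszul complex of these generators is an alternative (shorter) free resolution of $\Ok/\J$. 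A standard homological-algebra argument then produces a morphism of complexes from \eqref{acomplex} to this Koszul complex, holomorphic on a \nbh of $z_0$, lifting the identity on $\Ok/\J$. Tracing the construction of $U$ and $R$ from \cite{AW1} through this comparison identifies $R_p$ with a smooth multiple (coming from the comparison) of the classical Coleff-Herrera product $\bar\partial(1/w_p) \wedge \cdots \wedge \bar\partial(1/w_1) \wedge e_1 \wedge \cdots \wedge e_p$; the higher components $R_k$ for $k > p$ arise by applying the smooth $\sigma_k$'s. Combining with the Poincar\'e-Lelong relation \eqref{baka}, each $R_k$ on $U \cap Z_{reg}$ takes the form $\gamma_k \lrcorner [Z]$ with $\gamma_k$ smooth, and the polynomial growth of the comparison morphism and of the $\sigma_k$'s at $Z_{sing}$ gives the estimate \eqref{asympupp}.

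For (ii), let $\Phi$ be a smooth $(0,q)$-form whose pullback to $Z_{reg}$ vanishes. By (i), on $Z_{reg}$ we have $R = \gamma \lrcorner [Z]$; since $[Z]$ pairs with forms through their pullback to $Z_{reg}$, it follows that $(R \wedge \Phi)|_{Z_{reg}} = 0$. Hence the pseudomeromorphic current $R \wedge \Phi$ is supported in $Z_{sing}$. Because multiplication by the smooth form $\Phi$ commutes with the restriction operation $\cdot\, \mathbf{1}_{Z_{sing}}$, defined via analytic continuation of $|h|^{2\lambda}$ for $h$ a holomorphic defining function of $Z_{sing}$, we obtain $(R \wedge \Phi) \cdot \mathbf{1}_{Z_{sing}} = (R \cdot \mathbf{1}_{Z_{sing}}) \wedge \Phi = 0$, the last equality being the SEP of $R$ with respect to $Z$. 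On the other hand, since $\supp(R \wedge \Phi) \subseteq Z_{sing}$, the left-hand side equals $R \wedge \Phi$. Therefore $R \wedge \Phi = 0$.

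The main obstacle is part (i): making the comparison between the abstract resolution \eqref{acomplex} and the local Koszul complex explicit enough to exhibit both the Coleff-Herrera structure of $R_k$ on $Z_{reg}$ and the polynomial growth rate of $\gamma_k$ at $Z_{sing}$. Once this local picture is secured, (ii) is a clean consequence of the pseudomeromorphic calculus and the SEP.
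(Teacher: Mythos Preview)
Your approach to both parts matches the paper's closely: the local comparison with the Koszul complex on $Z_{reg}$ to express $R_p$ as a smooth multiple of the Coleff--Herrera product, the passage to higher $R_k$ via smooth $\Hom(E_k,E_{k+1})$-valued forms built from the minimal inverses (the paper's $\alpha_{k+1}$, essentially your $\sigma_k$'s), and for (ii) the combination of the local identity $R_k=\gamma_k\lrcorner[Z]$ with the SEP. The paper phrases the comparison as ``an arbitrary resolution at $x$ contains the Koszul complex as a direct summand'' and cites \cite{AW1}, Theorem~4.4, but this is the same mechanism as your morphism-of-complexes argument.

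There is, however, a genuine gap in your treatment of the asymptotic estimate \eqref{asympupp} for $k=p$. You assert that ``the polynomial growth of the comparison morphism \ldots\ at $Z_{sing}$ gives the estimate,'' but the comparison morphism you produce is only holomorphic on a \emph{local} \nbh of a fixed point $z_0\in Z_{reg}$, in straightening coordinates $(w,z')$ for $Z$ near $z_0$. As $z_0$ approaches $Z_{sing}$ these charts and comparison maps change, and nothing you have said controls the size of the smooth coefficient $\alpha$ in $R_p=\alpha\,\dbar(1/w_p)\w\cdots\w\dbar(1/w_1)$ uniformly. The paper itself flags this: within the proof it explicitly \emph{postpones} the estimate for $k=p$ to Section~\ref{asympsec}, which is devoted entirely to this point. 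The idea there is to replace the purely local straightening by a complete intersection $h_1,\ldots,h_p$ extracted from the global generators $f_1^j$ of $\J$, and then to prove (via a chain of lemmas about minimal resolutions and direct-summand decompositions) that the comparison between the given resolution and the Koszul complex of $h$ can be carried out \emph{meromorphically} on $\Omega$; meromorphic growth then yields the polynomial bound through a Lojasiewicz-type inequality. Your outline does not supply this globalization, and that is the substantive content of \eqref{asympupp}. Once the case $k=p$ is secured, your reduction to $k>p$ via the polynomial growth of the minimal inverses is correct and coincides with the paper's argument leading to \eqref{kanin}.
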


To be precise, $\gamma_k$ is a section of the bundle
$\Lambda^{0,k-p}T^*(X)\otimes E_k\otimes\Lambda^{p}T_{1,0}(X)$.
Part (ii) means that for each $\phi\in\E_{0,q}(Z)$  we have an intrinsically
defined current $R\w\phi$.

\begin{proof}
In a \nbh of a given point $x\in Z_{reg}$
we can choose coordinates $(w',w'')$ such that
$Z=\{w_1''=\ldots=w_p''=0\}$. 
Then $\J$ is generated by
$w_j''$,  the associated Koszul complex provides a (minimal)
resolution of $\Ok/\J$ there, and  the corresponding
residue current $R=R_p$ is just the Coleff-Herrera product
formed from the tuple $a=w''$, see Example~\ref{ulla} above.
An arbitrary resolution at $x$ will  contain the Koszul complex
as a direct summand, and it follows, see Theorem~4.4 in \cite{AW1} 
or Section~\ref{asympsec} below,  that  therefore
$$
R_p=\alpha \dbar\frac{1}{w_1''}\w\ldots\w
\dbar \frac{1}{w_p''},
$$ 
where $\alpha$ is a smooth $E_p$-valued form.  It follows that we can
take $\gamma_p$ as
$$
\tau=\alpha\otimes\frac{\partial}{\partial w_1''}\w\ldots\w
\frac{\partial}{\partial w''_p}/(2\pi i)^p.
$$
To obtain a global form, for  $x\in Z_{reg}$, let $L_x$ be the
orthogonal complement in $(T(X)_{1,0})_x$ of $(T(Z)_{1,0})_x$ (with respect to the
usual metric in the ambient space). We can then modify $\tau$ so that 
it takes values in $\Lambda^p L$ without affecting \eqref{gammak},
and $\gamma_p$ so defined is pointwise unique and hence a 
global smooth  form on $Z_{reg}$. For further reference we also notice that
the norm of $\gamma_p$ will not exceed the norm of the locally defined
form $\tau$.  The proof of the asymptotic estimate
\eqref{asympupp} for $k=p$ is postponed to Section~\ref{asympsec}.

Outside $Z_{k+1}$ there is a smooth $(0,1)$-form $\alpha_{k+1}$
(with values in $\Hom(E_k,E_{k+1})$) such that
$R_{k+1}=\alpha_{k+1} R_{k}$. Moreover, the 
denominator of $\alpha_{k+1}$   is the modulus square
of a tuple of subdeterminants of the matrix $f_k$, see \cite{AW1}, and 
hence $\alpha_k$  has polynomial growth when $\zeta\to Z_{k+1}$,
see \cite{AW1} Theorem~4.4.  It follows that we can take
\begin{equation}\label{kanin}
\gamma_k=\pm \alpha_k \cdots \alpha_{p+1}\gamma_p
\end{equation}
for $k\ge p+1$, and  \eqref{asympupp} for $k>p$ follows from the case $k=p$.

 To see (ii), assume that $\Phi$ vanishes on  $Z_{reg}$. 
Since $\Phi$ is  $(0,q)$ we  have that 
$
R_k\w\Phi=\gamma_k\lrcorner[Z]\w\Phi=\gamma_k\lrcorner([Z]\w\Phi)=0
$
on $Z_{reg}$. Now (ii) follows from  \eqref{sepp}.
\end{proof}

\section{Construction of  Koppelman formulas on $Z$}\label{koppsec}

We now recall the construction of integral formulas in \cite{A7}
on an open set $\Omega$ in $\C^n$. 
Let $(\eta_1,\ldots,\eta_n)$ be a holomorphic tuple in $\Omega_\zeta\times\Omega_z$
that span the ideal associated to the diagonal 
$\Delta\subset\Omega_\zeta\times\Omega_z$.
For instance, one can take $\eta=\zeta-z$. 
Following the last section in \cite{A7}  we 
consider forms in $\Omega_\zeta\times \Omega_z$ with values
in the exterior algebra $\Lambda_\eta$ 
spanned by $T^*_{0,1}(\Omega\times \Omega)$ and
the $(1,0)$-forms $d\eta_1,\ldots,d\eta_n$. 
On such forms interior multiplication $\delta_\eta$ with
$$
\eta=2\pi i \sum_1^n\eta_j\frac{\partial}{\partial \eta_j}
$$
has a meaning. We introduce  $\nabla_{\eta}=\delta_{\eta}-\dbar$.
Let $g=g_{0}+\cdots +g_{n}$   be a smooth form (in $\Lambda_\eta$)
defined for $z$ in  $\omega\subset\subset \Omega$  and $\zeta \in \Omega$, such that
$g_{0}=1$ on the diagonal $\Delta$ in $\omega\times \Omega$
(lower indices denote degree in $d\eta$) and $\nabla_\eta g=0$.
Such a form will be called a {\it weight} with respect to $\omega$.
Notice that if $g$ and $g'$ are weights, then $g\w g'$ is again  a  weight.
We  will use one weight that has compact support in
$\Omega$, and one weight which gives a division-interpolation type formula
with respect to the ideal sheaf $\J$ associated to the
variety $Z\subset\Omega$. 

\begin{ex}\label{alba}
If $\Omega$ is pseudoconvex and $K$ is a holomorphically convex compact
subset, then  one can find a weight with respect to some \nbh $\omega$ of $K$,
depending holomorphically on $z$,  
that has compact support (with respect to $\zeta$) 
in $\Omega$, see, e.g., Example~2 in \cite{A7}.
Here is an explicit choice  when $K$ is the closed ball $\overline{\B}$
and $\eta=\zeta-z$.
If 
$\sigma=\bar\zeta\cdot d\eta/2\pi i(|\zeta|^2-\bar\zeta\cdot z)$, 
then   $\delta_{\eta}\sigma=1$ for $\zeta\neq z$ and 
$$
\sigma\w(\dbar \sigma)^{k-1}=\frac{1}{(2\pi i)^k}
\frac{\bar\zeta\cdot d\eta\w(d\bar\zeta\cdot d\eta)^{k-1}}
{(|\zeta|^2-\bar\zeta\cdot  z)^k}.
$$
If $\chi$ is  a cutoff function that is $1$ in a slightly
larger ball, then we can take  
$$
g=\chi-\dbar\chi\w\frac{\sigma}{\nabla_{\eta} \sigma}=
\chi-\dbar\chi\w [\sigma+\sigma\w\dbar \sigma+ \sigma\w(\dbar \sigma)^2+
\cdots +\sigma\w(\dbar \sigma)^{n-1}].
$$
One can find a $g$ of the same form in the general case. 
\end{ex}

Assume now that $\Omega$ is pseudoconvex.
Let us fix global frames for the bundles $E_k$ in \eqref{bcomplex} over $\Omega$.
Then $E_k\simeq\C^{\rank E_k}$,  and 
the morphisms $f_k$  are just matrices of holomorphic functions.
One can find (see \cite{A7} for explicit choices)
$(k-\ell,0)$-form-valued Hefer morphisms, i.e.,  matrices,  
$H^\ell_k\colon E_k\to E_\ell$   depending holomorphically on $z$ and $\zeta$, such that 
$H^\ell_k=0$ for $ k<\ell$, $ H^\ell_\ell=I_{E_\ell}$, and in general,
\begin{equation}\label{Hdef}
\delta_{\eta} H^\ell_{k}=
H^\ell_{k-1} f_k -f_{\ell+1}(z) H^{\ell+1}_{k};
\end{equation}
here $f$ stands for $f(\zeta)$. Let
$$
HU = \sum_k H^{1}_k U^0_k,
\quad  HR=\sum_k H^0_k R_k.
$$
Thus $HU$ takes a section $\Phi$ of $E_0$, i.e., a function,
depending on $\zeta$ into a (current-valued) 
section $HU\Phi$ of $E_{1}$ depending on both $\zeta$ and $z$, and similarly,
$HR$ takes a section of $E_0$ into a section of $E_0$.

\smallskip

Let $s$ be a smooth $(1,0)$-form in $\Lambda_\eta$ such that $|s|\le C|\eta|$ and
$|\delta_\eta s|\ge C|\eta|^2$; such an $s$ is called  {\it admissible}. Then 
$B=s/\nabla_\eta s$ is a locally integrable form and 
\begin{equation}\label{bm}
\nabla_\eta B=1-[\Delta],
\end{equation}
where $[\Delta]$ is the $(n,n)$-current of integration over the diagonal
in $\Omega\times \Omega$. 
If $\eta =\zeta-z$, 
$s=\partial|\eta|^2$ will do, and we then  refer to the  resulting form $B$
as the Bochner-Martinelli form.

Let $g$ be any smooth weight (with respect to $\omega\subset\subset \Omega$,
but not necessarily holomorphic in $z$),
and with compact support in $\Omega$. For a smooth $(0,q)$-form $\phi$
on $Z$   we 
want to define
\begin{equation}\label{kurt5}
\K\phi(z)=\int_\zeta(HR\w g\w B)_n\w\phi, \quad z\in Z_{reg}\cap \omega,
\end{equation}
and
\begin{equation}\label{kurt8}
\Pr\phi(z)=\int_\zeta (HR\w g)_n\w \phi, \quad z\in \omega.
\end{equation}
Here the lower index denotes degree in $d\eta$. 
To this end, let $\Phi$ be any  smooth form in $\Omega$ whose pull-back
to $Z_{reg}$ is equal to $\phi$. 
If $\Phi$ is vanishing in a \nbh of some given point
$x$ on $Z_{reg}$, then $B\w\Phi$ is smooth in $\zeta$ for $z$ close to $x$,  and 
the integral is to be  interpreted as the current
$R$ acting on a smooth form. It is clear that this  integral depends
smoothly on $z\in Z_{reg}\cap\omega$ and in view of Proposition~\ref{regular}
it only depends on $\phi$. 
Let us then assume  that  $\Phi$ has support in  a \nbh of $x$  in which
$R=\gamma\lrcorner [Z]$. Notice that 
$$
(HR\w g\w B)_n=H_p^0R_p\w(g\w B)_{n-p}+H_{p+1}^0R_{p+1}\w (g\w B)_{n-p-1}+\cdots,
$$
cf., \eqref{rd}, 
and that
\begin{equation}\label{modehatt}
(g\w B)_{n-k}=\Ok(1/|\eta|^{2n-2k-1})
\end{equation}
so it is integrable on $Z_{reg}$ for $k\ge p$.
Thus 
\begin{equation}\label{skata}
\int_\zeta H^0_kR_k\w (g\w B)_{n-k}\w\Phi=
\pm\int_{\zeta\in Z}\gamma_k\lrcorner\big(H^0_k\w (g\w B)_{n-k}\big)\w\Phi
\end{equation}
is  defined pointwise and 
depends continuously on $z\in\omega$, and it is in fact
smooth on  $Z_{reg}\cap\omega$ according to  Lemma~\ref{brum}  below.
It is also clear from \eqref{skata} that the integral
only depends on the pullback of  $\Phi$ to $Z_{reg}$. 
In the same way one  gives a meaning to \eqref{kurt8}.

Since $B$ has bidegree $(*, *-1)$,
 $\K\phi$ is a $(0,q-1)$-form and $\Pr \phi$ is $(0,q)$-form.
It follows from \eqref{sepp} that \eqref{kpv} holds 
as principal values at $Z_{sing}$ with
\begin{equation}\label{kpv2}
K(\zeta,z)=\pm\gamma\lrcorner(H\w g\w B)_n, \quad P(\zeta,z)=\pm\gamma\lrcorner(H\w g)_n.
\end{equation}

\begin{prop}\label{koppelmanthm} Let  $g$ be  any smooth weight in $\Omega$ with 
respect to $\omega\subset\subset\Omega$ and with
compact support in $\Omega$. For any smooth $(0,q)$-form on $Z$,
$\K\phi$ is a smooth $(0,q-1)$-form in $Z_{reg}\cap\omega$, $\Pr\phi$ is a smooth
$(0,q)$-form in $\omega$, and we have the  Koppelman formula
\begin{multline}\label{koppelman3} 
\phi(z)=\\
\dbar_z\int(HR\w g\w B)_n\w\phi +
\int(HR\w g\w B)_n\w\dbar\phi +
\int(HR\w g)_n\w\phi,
\end{multline}
for $z\in Z_{reg}\cap\omega$.
\end{prop}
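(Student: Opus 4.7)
\emph{Plan of proof.} The proof fits into the general paradigm of \cite{A7}: a Koppelman formula on a target space is obtained from the ambient integral formula in $\Omega$ by choosing a $\nabla_\eta$-closed weight adapted to the target. The key step is to build such a weight from the residue current $R$ and the Hefer morphism.

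I would set $g^R := f_1(z)\cdot HU + HR$ (interpreted as a current in $\Lambda_\eta$) and verify two properties: $\nabla_\eta g^R = 0$ on $\Omega_\zeta\times\omega$, and $(g^R)_0|_\Delta = 1$. Both follow by combining the Hefer identities \eqref{Hdef} (used at $\ell=0,1$) with the structural equations $\nabla_fU=1-R$ and $\nabla_fR=0$ (the latter a consequence of $\nabla_f^2=0$), together with the super-Leibniz rules for $\delta_\eta$ and $\dbar$. The normalization $(g^R)_0|_\Delta = 1$ encodes the $E_0$-component $f_1U_1 = 1$ of $\nabla_fU=1-R$. Hence $g^R$ is a weight in the sense of Section~\ref{koppsec}, and so is the product $g^R\w g$; since $g$ has compact $\zeta$-support, so does $g^R\w g$.

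The ambient weighted Koppelman formula of \cite{A7}, applied to the weight $g^R\w g$ and to any smooth ambient extension $\Phi$ of $\phi\in\E_{0,q}(Z)$, gives for every $z\in\omega$
\begin{equation*}
\Phi(z) = \dbar_z\int_\zeta(g^R\w g\w B)_n\w\Phi + \int_\zeta(g^R\w g\w B)_n\w\dbar\Phi + \int_\zeta(g^R\w g)_n\w\Phi.
\end{equation*}
Specializing to $z\in Z_{reg}\cap\omega$, the condition $f_1(z)=0$ kills the $f_1(z)\cdot HU$ summand of $g^R$, so $(g^R\w g\w B)_n$ reduces to $(HR\w g\w B)_n$ and $(g^R\w g)_n$ reduces to $(HR\w g)_n$. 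This yields \eqref{koppelman3}. Independence from the choice of extension $\Phi$ is Proposition~\ref{regular}(ii); the principal-value interpretation at $Z_{sing}$ follows from \eqref{sepp} applied to the representation \eqref{skata}. Smoothness of $\K\phi$ on $Z_{reg}\cap\omega$ and of $\Pr\phi$ on $\omega$ is then a standard regularity statement for integrals with Bochner--Martinelli-type kernels (Lemma~\ref{brum}), using the estimate \eqref{asympupp} for $\gamma_k$.

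\textbf{Main obstacle.} The technical heart is the verification that $g^R=f_1(z)HU+HR$ is a $\nabla_\eta$-closed weight: it requires careful sign bookkeeping in the super-Leibniz rules on forms valued in $\Hom(E_k,E_\ell)$, the composition rules for the Hefer morphisms at the two adjacent levels $\ell=0,1$, and a distributional interpretation of the singular product $f_1(z)U_1|_\Delta$ that still yields the diagonal normalization $=1$.
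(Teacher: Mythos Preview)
Your formal scheme is exactly the one the paper follows: set $g^R=f_1(z)HU+HR$, observe that on $Z_{reg}\cap\omega$ it collapses to $HR$, and read off \eqref{koppelman3} from the ambient Koppelman machinery of \cite{A7}. The paper itself says this works ``on a formal level''. But there is a genuine gap in turning this into a proof, and it is not the one you flag.

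The weight machinery of \cite{A7} is set up for \emph{smooth} weights. Your $g^R$ is built from the currents $U$ and $R$, so it is not smooth. The derivation of the Koppelman formula requires the identity $\nabla_\eta(g^R\w g\w B)=g^R\w g-g^R\w g\w[\Delta]$, and here $B$ is singular along the diagonal while $U$, $R$ are singular along $Z\times\Omega$; for $z\in Z$ these singular sets meet at $(z,z)$, so the products $g^R\w B$ and $g^R\w[\Delta]$ have no a~priori meaning. Your ``main obstacle'' (sign bookkeeping, the normalization $f_1U_1=1$ on $\Delta$) is routine; the real obstacle is this collision of singularities.

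The paper resolves this by regularizing: with $U^\lambda=|F|^{2\lambda}u$ and $R^\lambda=1-|F|^{2\lambda}+\dbar|F|^{2\lambda}\w u$ one still has $\nabla_f U^\lambda=1-R^\lambda$, and for $\Re\lambda\gg 0$ the form $g^\lambda=f(z)HU^\lambda+HR^\lambda$ is as smooth as one likes, hence a legitimate weight. One then gets a bona fide Koppelman formula with $g^\lambda$, restricts to $z\in Z_{reg}\cap\omega$ (so $g^\lambda=HR^\lambda$), and the substantive work is to show that the integrals $\int(HR^\lambda\w g\w B)_n\w\Phi$ and $\int(HR^\lambda\w g)_n\w\Phi$ admit analytic continuation to $\Re\lambda>0$ and tend weakly to $\K\Phi$, $\Pr\Phi$ as $\lambda\searrow 0$. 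This last step is handled term by term in $k$: for $k\ge p$ one uses that $R_k^\lambda\to R_k$ as measures on $Z_{reg}$ (via the local description of $u_k$ and \eqref{anita}), while for $1\le k<p$ one shows the contribution is $\int\Ok(\lambda|a|^{2\lambda-2p+1})\to 0$. None of this is visible in your outline.
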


\begin{proof}
On a formal level  the Koppelman formula follows  from
Section~7.4 in \cite{A7} by just restricting  to  $z\in Z_{reg}\cap\omega$,
but for a  strict argument one must be careful with the 
limit processes.
Let $U^\lambda=|F|^{2\lambda}u$ and 
$$
R^\lambda=\sum_{k=0}^NR^\lambda_k=1-|F|^{2\lambda}+\dbar|F|^{2\lambda}\w u,
$$
so that $\nabla_f U^\lambda=1-R^\lambda$.
We can have 
$$
g^\lambda= f(z)HU^\lambda + HR^\lambda
$$
as smooth as we want by just taking $\Re\lambda$ large enough.
If  $\Re\lambda>>0$, then, cf.,  \cite{A7} p.325,
$g^\lambda$ is a weight,  and thus,
cf., \eqref{bm}, 
$$
\nabla_\eta(g^\lambda\w g\w B)=g^\lambda\w g-[\Delta]
$$
from which we get  
\begin{equation}\label{kolja}
\dbar(g^\lambda\w g\w B)_n=[\Delta]-(g^\lambda\w g)_n.
\end{equation}
As in
\cite{A7} we  get the Koppelman formula 
\begin{equation}\label{balja}
\Phi(z)=\int_\zeta (g^\lambda\w g\w B)_n\w\dbar\Phi +
\dbar_z\int_\zeta (g^\lambda\w g\w B)_n\w\Phi +\int_\zeta 
 (g^\lambda\w g)_n\w\Phi
\end{equation}
for $z\in \omega$, and since $g^\lambda=HR^\lambda$ when $z\in Z_{reg}$ we
get
\begin{multline*}
\Phi(z)=\int_\zeta (HR^\lambda \w g\w B)_n\w\dbar\Phi +\\
\dbar_z\int_\zeta  (HR^\lambda\w g\w B)_n\w\Phi +\int_\zeta 
  (HR^\lambda\w g)_n\w\Phi,\quad z\in Z_{reg}\cap \omega.
\end{multline*}
It is now enough to check that 
\begin{equation}\label{august1}
\int_\zeta (HR^\lambda \w g\w B)_n\w\Phi, \quad \int_\zeta (HR^\lambda \w g)_n\w\Phi
\end{equation}
have   analytic continuations to $\Re\lambda >0$ and tend weakly to 
 $\K\Phi$ and $\Pr\Phi$, respectively, when $\lambda\searrow 0$. 
To this end, fix a point $x$ on $Z_{reg}\cap\omega$. If $\Phi$ vanishes identically
in a \nbh of $x$, then the first integral
in \eqref{august1} is just the current $R^\lambda$ acting
on a smooth form, and hence the continuation exists to $\Re\lambda>-\epsilon$
and has the desired value
at $\lambda=0$. Therefore, we can assume that $\Phi$ has compact support in
a \nbh of $x$ where $R=\gamma\lrcorner [Z]$. 
Let $\psi(z)$ be a test form of bidegree $(n-p,n-p-q+1)$
with support in $Z_{reg}\cap\omega$. 
We have to prove that 
$$
\int_{z\in Z}\psi(z)\w \sum_{k=0}^N\int_\zeta H^0_kR^\lambda_k\w 
(g\w  B)_{n-k}\w\Phi
$$
is analytic for $\Re\lambda>0$ and tends to
$$
\int_{z\in Z}\psi(z)\w \K\Phi(z)
$$
when $\lambda\searrow 0$.
For $k\ge p$ we have, as before, cf., \eqref{modehatt}
that 
$$
\int_{z\in Z}\psi(z)\w\int_\zeta H^0_kR^\lambda_k\w (g\w  B)_{n-k}\w\Phi=
\int_\zeta R^\lambda_k \w\Phi\w T\psi,
$$
where  $T\psi(\zeta)$ is  continuous. 
If  $a_j=w''_j$ defines $Z$ locally as in the proof of Proposition~\ref{regular},
then $|F|\sim |a|$, and (see \cite{AW1})
$$
u_k=\alpha_k(u_p\oplus \alpha)
$$
where $\alpha, \alpha_k$ are smooth and $u_p$ is the form from Example~\ref{ulla}.
For $\Re\lambda>0$, the form $R_k^\lambda$ is locally integrable,
and in view of \eqref{anita} we have that $R^\lambda_k\to R_k$
as measures when $\lambda\searrow 0$. 
On the other hand, if  $1\le k<p$, then 
$$
T\psi(\zeta)=\int_{z\in Z}H^0_k\w (g\w B)_{n-k}\w\Psi(z)=\Ok(|a(\zeta)|^{-(2p-2k-1)}).
$$
Moreover, $u_k=\alpha_k(u_k\oplus \alpha)=\Ok(1/|a|^{2k-1})$.
Thus 
$$
\int_{z\in Z}\Psi(z)\int_\zeta 
 H^0_kR^\lambda_k(g\w B)_{n-k} \w\Phi=
\int_\zeta \Ok(\lambda |a|^{2\lambda-2p+1})
$$
which tends to $0$ when $\lambda\to 0$. 
Finally, the case $k=0$ is handled  by dominated
converence. 
The second integral in \eqref{august1} is treated  in a similar way.
\end{proof}

\begin{lma}\label{brum}
Let $\Phi$ be a non-negative function in $\R^N_x\times\R^N_y$ such that
$\Phi^2$ is smooth and $\sim|x-y|^2$. For each integer  $\ell\ge 0$, let
$\alpha_\ell$ denote a smooth function that is $\Ok(|x-y|^\ell)$, and let
$\E_\nu$ denote a finite sum 
$
\sum_{\ell\ge 0}\alpha_\ell/\Phi^{\nu+\ell}.
$
If $\nu<N$ and $\xi\in C_c^k(\R^N)$, then
$$
T\xi(x)=\int_y\E_\nu(x,y)\xi(y)dy
$$
is in $C^k(\R^N)$.
\end{lma}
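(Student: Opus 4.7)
The plan is to prove this by induction on $k$, using the change of variables $z = y - x$. Setting $\tilde\E_\nu(x,z) := \E_\nu(x, x+z)$ and $\tilde\xi(x,z) := \xi(x+z)$, one has $T\xi(x) = \int_z \tilde\E_\nu(x,z)\tilde\xi(x,z)\, dz$. The hypotheses $\Phi \sim |x-y|$ and $\alpha_\ell = \Ok(|x-y|^\ell)$ give $\tilde\E_\nu(x,z) = \Ok(|z|^{-\nu})$ uniformly for $x$ in compact sets, and since $\nu < N$ and $\xi$ has compact support, the integral converges absolutely and depends continuously on $x$ by dominated convergence. This handles the base case $k = 0$.

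For the inductive step, I would first show that the class $\E_\nu$ is preserved under the ``tangential'' first-order operators $D_j := \partial_{x_j} + \partial_{y_j}$. Since $\Phi^2$ is smooth and vanishes to order exactly two on the diagonal $\{x = y\}$, in the $(x,z)$-coordinates one may write $\Phi^2(x,x+z) = \sum_{l,m} h_{lm}(x,z)\,z_l z_m$ with smooth $h_{lm}$, so that $D_j(\Phi^2) = \Ok(|z|^2)$ in the smooth sense. The identity $2\Phi\, D_j\Phi = D_j(\Phi^2)$ then forces $D_j\Phi = \Ok(\Phi)$, and consequently
\[
D_j\!\left(\frac{\alpha_\ell}{\Phi^{\nu+\ell}}\right) \;=\; \frac{D_j\alpha_\ell}{\Phi^{\nu+\ell}} \;-\; \frac{(\nu+\ell)\,\alpha_\ell\, D_j(\Phi^2)}{2\,\Phi^{\nu+\ell+2}}.
\]
The first term is of $\E_\nu$-type with index $\ell$ (since $D_j\alpha_\ell$ remains smooth and $\Ok(|x-y|^\ell)$, the tangential derivative not lowering the vanishing order on the diagonal), and the second is of $\E_\nu$-type with index $\ell+2$; hence $D_j\E_\nu \subset \E_\nu$.

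Next, I would justify differentiation under the integral sign in the $(x,z)$-picture by the uniform bound $D_j\tilde\E_\nu = \Ok(|z|^{-\nu})$ on compact $x$-sets together with the boundedness and compact $z$-support of $\tilde\xi$ and $\partial\tilde\xi$. Translating back to $(x,y)$-coordinates then gives
\[
\partial_{x_j}(T_{\E_\nu}\xi)(x) \;=\; T_{D_j\E_\nu}\xi(x) \;+\; T_{\E_\nu}(\partial_j\xi)(x),
\]
so each $x$-derivative of $T\xi$ is again a sum of operators of the same form, applied either to $\xi$ or to one of its first derivatives. Iterating this identity $k$ times expresses every derivative of $T\xi$ of order $\le k$ as a finite sum of $C^0$-integrals of the type handled by the base case, yielding $T\xi \in C^k(\R^N)$.

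The main technical obstacle is the preservation step $D_j\E_\nu \subset \E_\nu$, which depends essentially on the hypothesis that $\Phi^2$ (rather than $\Phi$ itself) is smooth: it is precisely this that forces the tangential derivatives of $\Phi^2$ to still vanish to order two on the diagonal, and so makes $\Phi$ behave in the directional sense like a genuine distance function with $D_j\Phi = \Ok(\Phi)$, despite the fact that $D_j\Phi$ itself need not be smooth across $\{x=y\}$.
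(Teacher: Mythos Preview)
Your approach is essentially the same as the paper's: the paper introduces the tangential operators $L_j=\partial_{x_j}+\partial_{y_j}$ (your $D_j$), observes that $L_j\alpha_\ell$ is again of type $\alpha_\ell$, concludes $L_j\E_\nu=\E_\nu$, and says the lemma follows. Your write-up simply fills in the details the paper omits --- the base case via dominated convergence, the explicit handling of the $\Phi$-denominator through $D_j(\Phi^2)=\Ok(|x-y|^2)$, and the justification of differentiation under the integral sign --- all of which are correct.
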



This lemma should be well-known, but for the reader's convenience we sketch
a proof. Let
$L_j=(\partial/\partial x_j+\partial/\partial y_j)$. 
It is  readily checked that  $L_k\alpha_\ell=\alpha_\ell$ from which we conclude that
$L_k\E_\nu=\E_\nu$. The lemma then follows.


\section{Proofs  of Theorems~\ref{main} and \ref{mainglobal}}\label{pfs}

\begin{proof}[Proof of Theorem~\ref{main}]
If we  choose $g$ as the weight from Example~\ref{alba} then 
$\Pr \phi$ will vanish for degree reasons unless $\phi$ has bidegree
$(0,0)$, i.e., is a function, and in that case clearly
$\Pr \phi$ will be holomorphic for all $z$ in a $\omega$.
Now Theorem~\ref{main} follows from the Koppelman formula
\eqref{koppelman3} except for the asymptotic estimate
\eqref{asymp}.

After a slight regularization we may assume that
$\delta(z)$ is smooth on $Z_{reg}$ or alternatively we can replace 
$\delta$ by  $|h|$ where 
$h$ is a tuple of functions in $\Omega$ such that $Z_{sing}=\{h=0\}$,
by virtue of Lojasiewicz' inequality,
\cite{Loja} and \cite{Ma}.
Let $\mu=HR$.  We have to estimate 
\begin{equation}\label{puck}
\int_\zeta \mu(\zeta)\frac{\Ok(|\eta|)}{|\eta|^{2n-2p}}
\end{equation}
when $z\to Z_{sing}$.
To this end we take a smooth approximand of $\chi_{[1/\sqrt{2},\infty)}(t)$
and write \eqref{puck}  as
\begin{multline*}
\int_\zeta \chi(\delta(\zeta)/\delta(z))
\mu(\zeta)\frac{\Ok(|\eta|)}{|\eta|^{2n-2p}}+ 
\int_\zeta\big(1-\chi(\delta(\zeta)/\delta(z))\big)
\mu(\zeta)\frac{\Ok(|\eta|)}{|\eta|^{2n-2p}}.
\end{multline*}
In the first integral $\delta(\zeta)\sim\delta(z)$ and since
\eqref{gammak} holds here and the integrand is
integrable we can use \eqref{asympupp} and get the estimate
$\lesssim \delta(z)^{-M}$ for some $M$.
In the second integral we use instead that
$\mu$ has some fixed finite order so that the action can be estimates by
a finite number of derivatives of
$
(1-\chi)\Ok(|\eta|)/|\eta|^{2n-2p},
$
which again is like $\delta(z)^{-M}$ for some $M$,  since
here 
$
C|\eta|\ge|\delta(\zeta)-\delta(z)|\ge \delta(z)/2.
$
Thus we have 
$
|\K\phi(z)|\lesssim \delta(z)^{-M}.
$
\end{proof}

\begin{proof}[Proof of Corollary~\ref{maincor}]
Suppose that $\nu$ is the order of the current $R$.
Since $\K\Phi$ essentially is the current $R$ acting on
$\Phi$ times a smooth form, it is clear that the Koppelman
formula  remains  true even if $\Phi$ is just of class
$C^{\nu+1}$ in a \nbh of $Z$. However, it seems to be 
more delicate matter to check that 
$\K\Phi$ only depends on the pullback of $\Phi$ to $Z$. In order to
copy the argument in the proof of Proposition~\ref{regular}
one may need (possibly just for technical reasons) some
more regularity.   After appropriate 
resolutions of singularities, the current  $R$ 
is (locally) the push-forward of a finite sum of simple current of the form 
$$
\dbar\frac{1}{t_1^{a_1}}\w\ldots\w\dbar\frac{1}{t_r^{a_r}}
\w\frac{\alpha}{t_{r+1}^{a_{r+1}}\cdots t_m^{a_m}},
$$
where $\alpha$ is smooth.
If we choose $N$ as the sum of the powers
of the denominators then the argument will work. 
This follows from an inspection of the arguments
in \cite{AW2} but we omit the  details.
In general, however,  the number $N$  is much higher than the order of $R$.
\end{proof}

We now turn our attention to the proof of Theorem~\ref{mainglobal}.
We first assume that $X=Z$ is a subvariety of some domain $\Omega$ in $\C^n$.
A basic problem with the globalization is that we cannot assume that there
is one single resolution \eqref{acomplex} of $\Ok/\J$ in the whole
domain $\Omega$. We therefore must patch together local solutions. To this end we will
use Cech cohomology. Recall that if $\Omega_j$ is an open cover of $\Omega$, then
a $k$-cochain $\xi$ is a formal sum
$$
\xi=\sum_{|I|=k+1}\xi_I\w\epsilon_I
$$
where $I$ are multi-indices and $\epsilon_j$ is a nonsens basis, cf., e.g.,
\cite{A1} Section~8. Moreover, in this language
the coboundary operator $\rho$ is defined as
$\rho \xi = \epsilon\w \xi$,
where $\epsilon=\sum_j \epsilon_j$.

If $g$ is a weight as in Example~\ref{alba}  and $g'=(1-\chi)s/\nabla_\eta s$, then
\begin{equation}\label{gprim}
\nabla_\eta g'=1-g.
\end{equation}

Notice that the relations \eqref{Hdef} for the  Hefer morphism(s) 
can be written simply as
$$
\delta_\eta H=Hf-f(z)H=Hf
$$
if $z\in Z$.

\begin{proof}[Proof of Theorem~\ref{mainglobal} in case $Z\subset\Omega\subset \C^n$] 
Let $\Omega_j$ be a locally finite  open cover of $\Omega$  with convex polydomains
(Cartesian products of convex domains in each variable), and for each $j$ let 
$g_j$ be   a  weight with support in a slightly larger convex
polydomain  $\tilde\Omega_j\supset\supset\Omega_j$
and holomorphic in $z$ in a \nbh  of $\overline\Omega_j$.
Moreover, for each $j$ suppose that we have a given resolution \eqref{acomplex}
in $\tilde\Omega_j$, 
choice of Hermitian metric, a choice of Hefer morphism, and let  $(HR)_j$ be   the
resulting current.  If $\phi$ is a $\dbar$-closed $(0,q)$-form in $\Omega$, then
\begin{equation}\label{locsol}
u_j(z)=\int \big((HR)_j\w g_j\w B\big)_n\w\phi
\end{equation}
is a solution in $\Omega_j$ to $\dbar u_j=\phi$.
We will prove that $u_j-u_k$ is (strongly) holomorphic on $\Omega_{jk}\cap Z$
if $q=1$ and $u_j-u_k=\dbar u_{jk}$ on $\Omega_{jk}\cap Z_{reg}$ if
$q>1$, and more generally:

\smallskip
{\bf Claim I\ } {\it Let $u^0$ be the $0$-cochain $u^0=\sum u_j\w\epsilon_j$. For each
$k\le q-1$ there is  a $k$-cochain of $(0,q-k-1)$-forms on $Z_{reg}$ such that
$\rho u^k=\dbar u^{k+1}$ if $k<q-1$ and  $\rho u^{q-1}$ is a 
(strongly) holomorphic $q$-cocycle.} 
\smallskip

The holomorphic $q$-cocycle $\rho u^{q-1}$ defines a class in $H^q(\Omega,\Ok/\J)$ and
if $\Omega$ is pseudoconvex this class  must vanish,
i.e., there is a holomorphic $q-1$-cochain $h$ such that $\rho h=\rho u^{q-1}$.
By standard arguments this yields a global solution to $\dbar\psi=\phi$. For instance,
if $q=1$ this means that we have holomorphic functions $h_j$ in $\Omega_j$ such that
$u_j-u_k=h_j-h_k$ in $\Omega_{jk}\cap Z$. It follows that $u_j-h_j$ is a global
solution in $Z_{reg}$. 

\smallskip

We thus have to prove Claim~I.
To begin with we assume that we have a fixed resolution with a fixed metric 
and Hefer morphism;  thus 
a fixed choice  of current $HR$.  Notice that if
$$
g_{jk}=g_j\w g_k'-g_k\w g_j',
$$
cf., \eqref{gprim}, then 
$$
\nabla_\eta g_{jk}=g_j-g_k
$$
in $\tilde\Omega_{jk}$. With $g^\lambda$ as in Section~\ref{koppsec}, 
and in view of \eqref{bm},  we have 
$$
\nabla_\eta(g^\lambda\w g_{jk}\w  B)=
g^\lambda\w g_j\w B-g^\lambda\w g_k\w B - g^\lambda\w  g_{jk} 
+ g^\lambda\w g_{jk}\w [\Delta].
$$
However, the last term must vanish since $[\Delta]$ has full degree in
$d\eta$ and $g_{jk}$ has at least degree $1$. Therefore
$$
-\dbar(g^\lambda\w g_{jk}\w  B)_n= (g^\lambda\w g_j\w B)_n-(g^\lambda\w g_k\w B)_n -(g^\lambda\w g_{jk})_n
$$
and as before we can take $\lambda=0$ and get, 
assuming  that $\dbar\phi=0$,
\begin{equation}\label{skillnad}
u_j-u_k=
\int (HR\w g_{jk})_n\w \phi +\dbar_z\int (HR\w g_{jk}\w B)_n\w \phi.
\end{equation}
Since $g_{jk}$ is holomorphic in $z$ in $\Omega_{jk}$ it follows that
$u_j-u_k$ is  (strongly)  holomorphic in $\Omega_{jk}\cap Z$ if $q=1$ and
$\dbar$-exact on $\Omega_{jk}\cap Z_{reg}$  if $q>1$.


\smallskip
{\bf Claim II\ }
{\it Assume that we have a fixed resolution but different choices of
Hefer forms and metrics and thus different
$a_j=(HR)_j$ in $\tilde\Omega_j$. Let $\epsilon'_j$ be a nonsense basis.
If
$A^0=\sum a_j\w\epsilon'_j$, then for each $k>0$ there is a $k$-cochain 
$$
A^k=\sum_{|I|=k+1}A_I\w\epsilon_I', 
$$
where  $A_I$ are currents on $\tilde\Omega_I$ with support
on $\tilde\Omega_I\cap Z$ and holomorphic in $z$ in $\Omega_I$, such that
\begin{equation}\label{bus3}
\rho'A^k=\epsilon'\w A^k=\nabla_\eta A^{k+1}.
\end{equation}}
\smallskip

In particular we have currents 
$a_{jk}$  with support on $Z$ and such that
$
\nabla_\eta a_{jk}=a_j-a_k
$
in $\tilde\Omega_{jk}$.
If
$$
w_{jk}=a_{jk}\w g_j\w g_k+a_j\w g_j\w g_k'-a_k\w g_k\w g_j',
$$
then
$$
\nabla_\eta w_{jk}=a_j\w g_j-a_k\w g_k.
$$
Notice that $w_{jk}$ is a globally defined current.
By a similar argument as above (and via a suitable limit process) one gets that
$$
u_j-u_k=
\int(w_{jk})_n\w\phi+\dbar_z\int(w_{jk}\w B)_n\w \phi
$$
in $\Omega_{jk}$ as before. In general  we put 
$$
\epsilon'=g=\sum g_j\w\epsilon_j.
$$ 
If, cf.,\eqref{gprim},
$$
g'=\sum g'_j\w\epsilon_j
$$
then
$$
\nabla_\eta g'=\epsilon-g=\epsilon-\epsilon'.
$$
If $a_I$ is a form on $\tilde\Omega_I$, then
$a_I\w\epsilon'_I$ is a well-defined global form. 
Therefore $A$ and hence 
$$
W=A\w e^{g'},
$$ 
(i.e., $W^k=\sum_j A^{k-j} (g')^{j}/j!$)
has globally defined coefficients and
$$
\rho W=\nabla_\eta W.
$$
In fact, since $A$ and $g'$ have even degree, 
$$
\nabla_\eta(A\w e^{g'})=\epsilon'\w A\w e^{g'}+
A\w e^{g'}\w (\epsilon-\epsilon')=\epsilon\w A\w e^{g'}.
$$
By the yoga above  then the $k$-cochain 
$$
u^k=\int (W^k\w B)_n\w\phi
$$
satisfies
$$
\rho u^k=\dbar_z \int (W^{k+1}\w B)_n\phi +\int(W^{k+1})_n\w\phi.
$$
Thus  $\rho u^k=\dbar u^{k+1}$ for $k<q-1$ whereas $\rho\w u^{q-1}$
is a holomorphic $q$-cocycle as desired.

\smallskip
It remains to consider the case when we have different resolutions in $\Omega_j$.
For each pair $j,k$
choose a weight $g_{s_{jk}}$ with support in $\tilde\Omega_{jk}$ that is 
holomorphic in $z$ in $\Omega_{s_{jk}}=\Omega_{jk}$.
By Theorem~3 Ch.~6 Section~F in \cite{GR}
we can choose a resolution in $\tilde\Omega_{s_{jk}}=\tilde\Omega_{jk}$ 
in which  both of the
resolutions in $\tilde\Omega_j$ and $\tilde\Omega_k$ restricted to $\Omega_{s_{jk}}$ 
are direct summands.
Let us fix metric and Hefer form and thus a current $a_{s_{jk}}=(HR)_{s_{jk}}$ 
in $\Omega_{s_{jk}}$
and thus a  solution $u_{s_{jk}}$ corresponding to $(HR)_{s_{jk}}\w g_{s_{jk}}$.
If we extend the metric and Hefer form from $\tilde\Omega_j$ in a way that respects
the direct sum, then $(HR)_j$ with these extended choices will be unaffected,
cf., Section~4 in \cite{AW1}. On $\tilde\Omega_{js_{jk}}$ we
therefore  practically speaking 
have  just one single resolution and as before thus  $u_j-u_s$
is holomorphic (if $q=1$) and $\dbar  u_{j s_{jk}}$ if $q>1$.
It follows that 
$u_j-u_k=u_j-u_s+u_s-u_k$ is holomorphic on $\Omega_{jk}$ if $q=1$ and equal to
$\dbar$ of 
$$
u_{jk}=u_{js_{jk}}+u_{s_{jk}k}
$$
if $q>1$.
We now claim that each $1$-cocycle 
\begin{equation}\label{bart}
u_{jk}+u_{kl}+u_{lj}
\end{equation}
is holomorphic on $\Omega_{jkl}$ if $q=2$ and
$\dbar$-exact on $\Omega_{jkl}\cap Z_{reg}$ if $q>2$.
On $\tilde\Omega_{s_{jkl}}=\tilde\Omega_{jkl}$ we can choose a resolution in which
each of the resolutions associated with the indices $s_{jk}, s_{kl}$ and $s_{kj}$
are direct summands. It follows that 
$
u_{js_{jk}}+u_{s_{jk}s_{jkl}}+u_{s_{jkl}j}
$
is holomorphic if $q=2$ and $\dbar u_{j s_{jk}s_{jkl}}$ if $q>2$.
Summing up, the statement about \eqref{bart} follows.
If we continue in this way Claim~I follows.

\smallskip
It remains to prove Claim~II.
It is not too hard to check by an appropriate
induction procedure, cf., the very construction
of Hefer morphisms in \cite{A7}, that if we have two choices
of (systems of) Hefer forms $H_j$ and $H_k$ for the same
resolution $f$, then there is a form
$H_{jk}$ such that 
\begin{equation}\label{tarta1}
\delta_\eta H_{jk}=H_j-H_k+f(z)H_{jk}-H_{jk}f.
\end{equation}
More generally, if
$$
H^0=\sum H_j\w\epsilon_j
$$
then for each $k$ there is a  (holomorphic) $k$-cochain $H^k$ such that
(assuming $f(z)=0$ for simplicity)
\begin{equation}\label{bus1}
\delta_\eta H^k=\epsilon\w H^{k-1}-H^k f
\end{equation}
(the difference in sign between \eqref{tarta1} and \eqref{bus1} is because
in the latter one $f$ is to the right of the basis elements).

Elaborating the construction in Section~4 in \cite{AW1}, cf., 
Section~8 in \cite{A1},  one  finds, given $R^0 
=\sum R_j\w\epsilon_j$,  $k$-cochains of currents $R^k$ such that 
\begin{equation}\label{bus2}
\nabla_f R^{k+1}= \epsilon\w  R^k.
\end{equation}

We now define a product of forms in the following way.
If  the multiindices $I,J$ have no index in common, then
$(\epsilon_I,\epsilon_J)=0$, whereas
$$
(\epsilon_I\w\epsilon_\ell,\epsilon_\ell\w\epsilon_J)=\frac{|I|!|J|!}{(|I|+|J|+1)!}
\epsilon_{I}\w\epsilon_J.
$$
We then extend it to any forms bilinearly in the natural way.
It is easy to check that 
$$
(H^k f, R^\ell)=-(H^k, f R^\ell).
$$
Using  \eqref{bus1} and \eqref{bus2} (and keeping in mind that
$H^k$ and $R^\ell$ have odd order) one can verify that 
$$
\nabla_\eta(H^k,R^\ell)=(\epsilon\w H^{k-1}, R^\ell)
+(H^k, \epsilon\w R^\ell).
$$
By a similar argument one can  finally check  that
$$
A^k=\sum_{j=0}^k(H^j,R^{k-j})
$$
will satisfy \eqref{bus3}.
Thus  Claim~II and  hence Theorem~\ref{mainglobal} is proved in case 
$Z=X$ is a subvariety of $\Omega\subset\C^n$. 
\end{proof}

The extension to a general analytic space $X$ is done in pretty much the same
way and we just sketch the basic  idea. First assume that we have
a fixed $\eta$ as before but two different choices $s$ and $\tilde s$ of
admissible form, and let $B$ and $\tilde B$ be the corresponding locally
integrable forms. Then, see \cite{A13}, 
\begin{equation}\label{bbprim}
\nabla_\eta (B\w\tilde B)=\tilde B-B
\end{equation}
in the current sense, 
and by a minor modification of Lemma~\ref{brum}
one can  check that 
$$
\int (HR\w g\w B\w \tilde B)_n\w\phi
$$
is smooth on $X_{reg}\cap\omega$; for degree reasons it  vanishes if $q=1$.
It follows from \eqref{bbprim} that 
$
\nabla_\eta(HR^\lambda\w g\w B\w \tilde B)=
HR^\lambda\w g\w \tilde B-
HR^\lambda\w g\w B
$
from which we can conclude that
\begin{multline}\label{hast}
\dbar_z\int (HR\w g\w B\w\tilde B)_n\w\phi=\\
\int(HR\w g\w B)_n\w\phi-\int(HR\w g\w \tilde B)_n\w\phi,
\quad  z\in \omega\cap Z_{reg}.
\end{multline}

Now  let us assume  that
we have two local solutions,  in say $\omega$ and $\omega'$,  obtained from
two different embeddings of slightly larger sets $\tilde \omega$ and $\tilde\omega'$
in subsets of $\C^{n}$ and $\C^{n'}$, respectively. We want to compare these
solutions on $\omega\cap\omega'$. 
Localizing further, as before, we may assume that the weights 
both have support in $\tilde \omega\cap\tilde \omega'$.  After adding nonsense
variables  we may assume that both embeddings are into the same
$\C^n$, and after further localization there is a local
biholomorphism in $\C^n$ that maps one embedding onto the other one,
see \cite{GR}.  
(Notice that a solution  obtained via an embedding in $\C^{n_1}$
also can be obtained via an embedding into a larger $\C^n$, by just adding
dummy variables in the first formula.)
In other words, we may assume that we have the same
embedding in some open set $\Omega\subset\C^n$ but two solutions obtained  from
different  $\eta$ and $\eta'$. 
(Arguing as before, however, we may assume that we have the same resolution
and the same residue current $R$.)
Locally there is an  invertible matrix $h_{jk}$ such that
\begin{equation}\label{arvika}
\eta'_j=\sum h_{jk}\eta_k.
\end{equation}
We define a vector bundle mapping
$
\alpha^*\colon \Lambda_{\eta'}\to\Lambda_\eta
$
as  the identity on $T^*_{0,*}(\Omega\times\Omega)$
and so that  
$$
\alpha^* d\eta_j'=\sum h_{jk} d\eta_k.
$$
It is readily checked that
$$
\nabla_\eta\alpha^*= \alpha^*\nabla_{\eta'}.
$$
Therefore, $\alpha^*g'$ is an $\eta$-weight if $g'$ is an $\eta'$-weight.
Moreover,  if $H$ is an $\eta'$-Hefer morphism, then $\alpha^*H$
is an $\eta$-Hefer morphism, cf., \eqref{Hdef}. 
If $B'$ is obtained from an $\eta'$ admissible form $s'$, then
$\alpha^*s'$ is an $\eta$-admissible form and
$\alpha^*B'$ is the corresponding locally integrable form.
We  claim  that the $\eta'$-solution
\begin{equation}\label{prall}
v'=\int (H'R\w g'\w B')_n\w\phi
\end{equation}
is comparable to the $\eta$-solution
\begin{equation}\label{prall2}
v=\int \alpha^*(H'R)\w\alpha^* g'\w \alpha^* B'\w\phi.
\end{equation}
Notice that we are only interested in the $d\zeta$-component of the
kernels.
We have that ($d\eta=d\eta_1\w\ldots\w d\eta_n$ etc)
$$
(H'R\w g'\w B')_n=A\w d\eta' \sim
A\w\det(\partial\eta'/\partial\zeta) d\zeta
$$
and 
$$
\alpha^*(H'R\w g'\w B')_n=A\w\det h\w d\eta
\sim A\w\det h \det (\partial\eta/\partial\zeta) d\zeta.
$$ 
Thus
$$
\alpha^*(H'R\w g'\w B')_n\sim \gamma(\zeta,z)(H'R\w g'\w B')_n
$$
with
$$
\gamma=\det h \det\frac{\partial\eta}{\partial\zeta}
\Big(\det\frac{\partial\eta'}{\partial\zeta}\Big)^{-1}.
$$
From  \eqref{arvika} we have that
$\partial\eta'_j/\partial\zeta_\ell=
\sum_k h_{jk}\partial\eta_k/\partial\zeta_{\ell} +\Ok(|\eta|)
$
which implies that $\gamma$ is $1$ on the diagonal.
Thus  $\gamma$ is a smooth (holomorphic) weight and therefore 
\eqref{prall} and \eqref{prall2} are comparable, and thus the claim
is proved. This proves Theorem~\ref{mainglobal} in the case $q=1$, and
elaborating the idea  as in the
previous proof we obtain the general case.

\begin{remark}
In case $X$ is a Stein space and $X_{sing}$ is discrete there is a much simpler proof
of Theorem~\ref{mainglobal}. 
To begin with we can solve $\dbar v=\phi$ locally, and 
modifying by such local solutions we may assume that
$\phi$ is vanishing identically in a \nbh of $X_{sing}$.
There exists  a sequence of holomorphically convex open subsets
$X_j$ such that $X_j$ is relatively compact in $X_{j+1}$ and
$X_j$ can be embedded as a subvariety of some pseudoconvex set
$\Omega_j$ in $\C^{n_j}$. Let $K_\ell$ be the closure of $X_\ell$.
By Theorem~\ref{main} we can solve $\dbar u_\ell =\phi$ in a \nbh of 
$K_\ell$ and  $u_\ell$ will be smooth. 
If $q>1$ we can thus  solve $\dbar w_\ell=u_{\ell+1}-u_\ell$
in a \nbh of $K_\ell$, and since $Z_{sing}$ is discrete we can 
assume that $\dbar w_\ell$ is smooth in $X$.  Then
$v_\ell=u_\ell-\sum_1^{\ell-1}\dbar w_k$ defines a global solution.
If $q=1$, then  one  obtains a global solution in a similar way
by  a Mittag-Leffler type argument. 
\end{remark}

\section{The asymptotic estimate}\label{asympsec}

To catch the asymptotic behaviour we have to globalize 
the proof of the first part of Proposition~\ref{regular}.  

Since the functions $f_1^j$ generate the ideal $\J$, 
given any fixed point $x$ on $Z_{reg}$ we can extract
$h_1,\ldots, h_p$ from $f_1^j$ such that
$dh_1\w\ldots\w dh_p\neq 0$ at $x$. After a  reordering of the variables
we may  assume that $\zeta=(\zeta',\zeta'')=
(\zeta',\zeta''_1,\ldots,\zeta''_p)$ such that 
$H=\det(\partial h/\partial\zeta'')\neq 0$ at $x$. 
Outside the hypersurface $\{H=0\}$  we can (locally) make the change of coordinates
$(\omega',\omega'')=(\zeta', h(\zeta',\zeta''))$ since
$$
\frac{d(\omega',\omega'')}{d(\zeta',\zeta'')}=H.
$$
Moreover, 
$$
\frac{\partial}{\partial\omega_j''}=\frac{1}{H}\sum_k A_{jk}
\frac{\partial}{\partial\zeta''_k},
$$
where $A_{jk}$ are global holomorphic functions. 
Therefore, 
anywhere outside $\{H=0\}$ we have that
\begin{equation}\label{halmstra}
\dbar\frac{1}{h_p}\w\ldots\w\dbar\frac{1}{h_1}=
\frac{\det A_{jk}}{H^p}\otimes
\frac{\partial}{\partial\zeta''_1}\w\cdots\w\frac{\partial}{\partial \zeta''_p}
\lrcorner [Z].
\end{equation}

\begin{prop}\label{allan}
Given a point $x\in Z_{reg}$, there is a hypersurface  $Y=\{H=0\}$ avoiding
$x$ such that
\begin{equation}\label{allaneq}
R_p=\tau \dbar\frac{1}{h_p}\w\ldots\w\dbar\frac{1}{h_1},
\end{equation}
where  $\tau$ is smooth outside $Y$ and $\tau=\Ok(H^{-M})$ for some
$M>0$.
\end{prop}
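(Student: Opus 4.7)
The strategy is to globalize the local argument in the proof of Proposition~\ref{regular}(i), using the subsequence $h=(h_1,\ldots,h_p)$ extracted from the generators $f_1^j$ of $\J$ in place of local holomorphic coordinates. First I would observe that outside $Y=\{H=0\}$ the tuple $h$ is a regular sequence generating $\J$ locally on $Z$ (since $h_j\in\J$ and $dh_1\w\cdots\w dh_p\neq 0$ there), so by Example~\ref{ulla} the Koszul complex $K_h^{\bullet}$ on $h$ resolves $\Ok/\J$ on a \nbh of $Z_{reg}\setminus Y$, and its associated residue current is the Coleff-Herrera product $\rho=\dbar(1/h_p)\w\cdots\w\dbar(1/h_1)\otimes e_{1\ldots p}$.

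The second step is to compare \eqref{acomplex} with $K_h^{\bullet}$ outside $Y$. Both complexes resolve $\Ok/\J$ there, so the standard lifting lemma for free resolutions produces chain maps $\phi\colon K_h^{\bullet}\to E^{\bullet}$ and $\psi\colon E^{\bullet}\to K_h^{\bullet}$ (unique up to homotopy) with $\psi\phi\sim I$ and $\phi\psi\sim I$. The map $\psi_k$ is constructed inductively starting from $\psi_0=I$; at the first step $\psi_1\colon E_1\to K_h^1=\Ok^p$ is obtained by Cramer's rule using the non-vanishing of $H$ outside $Y$, and iterating yields each $\psi_k$ as a holomorphic morphism outside $Y$ whose entries have denominators bounded by powers of $H$.

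Then I would invoke Theorem~4.4 of \cite{AW1}, which establishes that the residue current $R$ associated to \eqref{acomplex} and the Koszul residue current $\rho$ transform compatibly under such chain maps: up to a smooth Hermitian correction coming from the metric choice, $R_p$ equals $\psi_p$ applied componentwise to the explicit form of $\rho$. This yields the desired identity $R_p=\tau\,\dbar(1/h_p)\w\cdots\w\dbar(1/h_1)$ with $\tau$ smooth outside $Y$ and $|\tau|\lesssim H^{-M}$ for some uniform $M$.

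The main obstacle is verifying that this identity holds genuinely as an equality of currents on $Z_{reg}\setminus Y$, independently of the chosen lift $\psi$. Two admissible lifts differ by a chain homotopy, producing a discrepancy in the expression for $R_p$ that is pseudomeromorphic with support in $Y\cap Z$, a variety of codimension $\ge p+1$ in $\Omega$. Proposition~\ref{hyp} together with the SEP of $R_p$ (equation~\eqref{sepp}) then force this discrepancy to vanish identically, so the identity and the asserted bound $\tau=\Ok(H^{-M})$ follow.
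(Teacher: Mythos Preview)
Your overall strategy—compare the resolution \eqref{acomplex} with the Koszul complex on $h$ via chain maps and then invoke the comparison principle for residue currents from \cite{AW1}—is the paper's strategy as well. The genuine gap is the step you label ``Cramer's rule'': you assert that the lift $\psi_k$ is holomorphic off $Y$ with denominators bounded by powers of $H$, but this is the entire content of the proposition and you have not proved it. To build $\psi_1$ you must write each generator $f_1^j\in\J$ as $\sum_k a_{jk}h_k$; this is a module-division problem, not linear algebra, and the Jacobian determinant $H$ plays no direct role. Local solvability near points of $Z_{reg}\setminus Y$ gives no control as one approaches $Y$—a function holomorphic on $\{H\neq 0\}$ may well grow like $e^{1/H}$.

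The paper supplies exactly this missing control. Lemma~\ref{simple} shows by coherence that if $\Phi_x$ lies in the submodule $(F_1,\ldots,F_m)_x$ and all data extend meromorphically to $\Omega$, then the coefficients in the membership relation can be chosen meromorphic as well. Iterating this (Claims~I--III) yields a \emph{meromorphic} splitting $E_\bullet\simeq E'_\bullet\oplus E''_\bullet$ into a minimal resolution at $x$ plus a trivial complex, together with a \emph{meromorphic} isomorphism between $E'_\bullet$ and $K_h^\bullet$ (both being minimal at $x$). Meromorphy is precisely what forces $\tau=\Ok(H^{-M})$, after absorbing the new pole loci into $Y$. A further ingredient you omit is the metric correction: with the original Hermitian metric one has $R_p=\pi\tilde R_p$, where $\pi$ is the orthogonal projection off $\Im f_{p+1}$; it is smooth only outside $Z_{p+1}$, but has norm $\le 1$, so it does not spoil the estimate. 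Two minor points: the map you need runs $K_h\to E$ (so that the Koszul current $\rho$ is pushed into $E_p$), not $\psi\colon E\to K_h$ as you write; and your final paragraph on homotopies is unnecessary, since $R_p$ is a fixed current and the identity $R_p=\tau\rho$ determines $\tau$ on $Z_{reg}\setminus Y$ uniquely wherever it holds.
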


It follows from \eqref{halmstra} and \eqref{allaneq}, cf., the proof
of Proposition~\ref{hyp},  that
$$
|\gamma_p|\le C|H|^{-M}.
$$
With a finite number of such choices $H_j$ we have that
$$
Z_{sing}=\cap_j\{H_j=0\}
$$
and thus
$$
|\gamma_p(z)|\lesssim \min_j |H_j(z)|^{-M_j}\le
C|H(z)|^{-M},
$$
where $H=(H_1,\ldots,H_\nu)$. However $|H|\ge\delta^N$ for some $N$ and hence
\eqref{asympupp} follows for $k=p$.


It remains to prove  Proposition~\ref{allan}.
We begin with the following simple lemma.

\begin{lma}\label{simple}
Assume that $F_1,\ldots,F_m,\Phi$ are holomorphic $r$-columns at $x\in \Omega$
and that the germ $\Phi_x$ is in the submodule of $\Ok_x^{\oplus r}$  generated by 
$(F_j)_x$. If $F_j,\Phi$ have meromorphic extensions to $\Omega$, then there
are holomorphic $A_j$ with meromorphic extension to (a possibly somewhat smaller 
neighborhood)  $\Omega$
such that $\Phi=A_1F_1+\cdots +A_mF_m$.
\end{lma}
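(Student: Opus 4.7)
The plan is to produce a meromorphic solution on $\Omega$ by Cramer's rule and then correct it by a global syzygy so that it becomes holomorphic at $x$. First I would shrink $\Omega$ to a connected Stein neighborhood of $x$ (the statement permits this) and clear denominators: choose a meromorphic $h$ on $\Omega$, holomorphic and nonvanishing at $x$, so that $hF_j$ and $h\Phi$ are holomorphic on $\Omega$. Since $\Phi=\sum A_jF_j$ is equivalent to $h\Phi=\sum A_j(hF_j)$, this reduces to the case where all the $F_j$ and $\Phi$ are holomorphic on $\Omega$ while preserving the hypothesis at $x$.

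Next, let $F$ be the $r\times m$ matrix with columns $F_j$. The local holomorphic relation $\Phi_x=\sum a_j(F_j)_x$ implies that $\mathrm{rank}\,[F\,|\,\Phi]=\mathrm{rank}\,F$ in a neighborhood of $x$; by lower semicontinuity of rank and connectedness of $\Omega$, both matrices have the same generic rank $k$ on $\Omega$. Choose $I\subset\{1,\ldots,r\}$ and $J\subset\{1,\ldots,m\}$ of size $k$ so that $M=\det F_I^J\not\equiv 0$, set $A_j^{(0)}=0$ for $j\notin J$, and solve the $I$-rows of the system by Cramer's rule to obtain meromorphic $A_j^{(0)}$ (with denominator $M$) for $j\in J$. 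The identity $FA^{(0)}=\Phi$ holds on the dense open set $\{M\ne 0\}$, because the rank condition there forces the rows outside $I$ to be consequences of the $I$-rows; hence by the identity principle for meromorphic functions on the connected $\Omega$, it holds on all of $\Omega$.

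Finally I must remove any poles that $A^{(0)}$ may have at $x$ when $M(x)=0$. The germ $A^{(0)}_x-a$ is a meromorphic syzygy of $F$ at $x$. The syzygy sheaf $\mathcal{S}=\ker\big(\Ok^{\oplus m}\xrightarrow{F}\Ok^{\oplus r}\big)$ is coherent, so by Cartan's Theorem~A on the Stein $\Omega$ its stalk at $x$ is generated over $\Ok_x$ by finitely many global sections $S_1,\ldots,S_\ell\in\mathcal{S}(\Omega)$. Writing $A^{(0)}_x-a=\sum b_i(S_i)_x$ with meromorphic germs $b_i$ at $x$, and shrinking $\Omega$ once more so that each $b_i$ extends to a meromorphic function on $\Omega$, the corrected $A:=A^{(0)}-\sum b_iS_i$ is meromorphic on $\Omega$, satisfies $FA=\Phi$, and has germ $a$ at $x$, hence is holomorphic at $x$ as required.

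The main technical obstacle is this last step's extension of the meromorphic germs $b_i$ at $x$ to meromorphic functions on a slightly smaller $\Omega$; this is exactly what the parenthetical ``possibly somewhat smaller neighborhood'' in the statement accounts for. Everything else reduces to linear algebra over the field of meromorphic functions on a connected Stein domain together with the coherence of the syzygy sheaf.
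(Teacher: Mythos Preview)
Your argument is correct, but the paper's proof is considerably shorter and takes a different route. After (implicitly) clearing denominators, the paper considers the coherent quotient sheaf $\F=(F_1,\ldots,F_m,\Phi)/(F_1,\ldots,F_m)$; the hypothesis at $x$ means $\F_x=0$, so the support of $\F$ is a subvariety $Y$ with $x\notin Y$. Picking a holomorphic $h$ vanishing on $Y$ but not at $x$, coherence gives $h^M\F=0$ on a Stein neighborhood $\Omega'$ for large $M$, and then Cartan~B produces global holomorphic $a_j$ with $h^M\Phi=\sum a_jF_j$, so that $A_j=a_j/h^M$ does the job in one stroke.

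Your approach instead manufactures a meromorphic solution by Cramer's rule and then repairs it at $x$ using global syzygies via Cartan~A. This is more hands-on: the Cramer step makes the denominator of $A^{(0)}$ explicit (a fixed $k\times k$ minor of $F$), whereas the paper's $h$ is just any equation for $Y$. The cost is the extra syzygy-correction step and the second shrinking to extend the meromorphic germs $b_i$. Both proofs ultimately rest on coherence plus a Cartan theorem on a Stein domain; the paper packages this into the single observation that a coherent sheaf with zero stalk at $x$ is annihilated near $x$ by a power of any defining function of its support.
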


\begin{proof} The analytic sheaf
$\F=(F_1,\ldots,F_m,\Phi)/(F_1\ldots,F_m)$ is coherent in $\Omega$ and vanishing at
$x$ so it must have support on a variety $Y$ outside $x$. If $h$ is holomorphic
and vanishing on $Y$, then $h^M\F=0$ in a Stein \nbh $\Omega'$ of the closed ball if
$M$ is large enough. Therefore there are holomorphic functions $a_j$ 
in $\Omega'$ such that $h^M\Phi=\sum a_j F_j$.
\end{proof}

Suppose that the holomorphic $r$-columns $F=(F_1,\ldots, F_m)$ and
$\tilde F=(\tilde F_1,\ldots, \tilde F_{\tilde m})$ are minimal
generators of  the same sheaf at $x$.
It is well-known that then   $\tilde m=m$ and there is a holomorphic
invertible $m\times m$-matrix $a$ at $x$ such that $\tilde F=Fa$.

\smallskip

\noindent{\bf Claim I}
{\it If $F,\tilde F$ have meromorphic extensions to $\Omega$, 
then we may assume that $a$ has as well.} 

\begin{proof}
By Lemma~\ref{simple} we have global
meromorphic matrices $a$ and $b$, holomorphic at $x$, such that
$\tilde F=Fa$ and $F=\tilde F b$. Thus $F=Fab$, and since $F$
is minimal, it follows that $ab=I+\alpha$ where the entries in $\alpha$ belong to the
maximal ideal at $x$, i.e., $\alpha(x)=0$. Therefore the matrix
$I+\alpha$ is invertible at $x$, and so $b(I+\alpha)^{-1}$ is
a meromorphic inverse to $a$ that is holomorphic and an 
isomorphism at $x$.
\end{proof}

Assume that  $\F$ is a coherent sheaf in $\Omega$ of  codimension
$p$ at $x$ and let  
$\Ok(E_k), f_k$ and $\Ok(\tilde E_k),\tilde f_k$, $k=0,\ldots,p$,
be  two minimal free resolutions of $\F$ at $x\in \Omega$. 
Moreover, assume that all $f_k,\tilde f_k$ have meromorphic extensions
to $\Omega$. By iterated use of Claim~I we get:

\smallskip
\noindent{\bf Claim II}
{\it There are  isomorphisms $g_k\colon \Ok(E_k)\to\Ok(\tilde E_k)$
holomorphic at $x$ and with meromorphic extensions to $\Omega$
such that $g_{k-1}f_k=\tilde f_k g_k$}

\smallskip

Assume for simplicity that $E_0=\tilde E_0$. Outside some hypersurface
$Y$ all  the mappings $f_k, \tilde f_k, g_k$ are holomorphic,
and there  we have well-defined currents $R_p$ and $\tilde R_p$,
and $\tilde R_p=g_p R_p$ there, cf., Section~4 in  \cite{AW1}.
Since the codimension  is $p$ and the complexes  end up at $p$ the residue
currents $R_p$ and $\tilde R_p$ are independent of the choice of 
Hermitian metrics. 

\smallskip
Now let  $\Ok(E_k), f_k$ be an arbitrary free resolution
of $\F$ in $\Omega$.  It is well-known that, given $x\in \Omega$,  there is locally a 
holomorphic decomposition $E_k=E_k'\oplus E_k''$, $f_k=f_k'\oplus f_k''$
such that $\Ok(E_k'), f_k'$ is a minimal free resolution of $\F$ at $x$
and $\Ok(E_k''), f_k''$ is a free resolution of $0$.
In other words, if we fix global holomorphic frames $e_k$
for $E_k$ to begin with, then there are holomorphic
$G_k$ with values in $GL(\rank E_k,\C)$ such that the first
$\rank E_k'$ elements  in  $e_kG_k$ generate $E_k'$
whereas the last ones generate $E''_k$. 
We claim, as the reader may expect at this stage, that 

\smallskip
\noindent{\bf Claim III}
{\it The $G_k$
can be assumed to have meromorphic extensions to $\Omega$.}

\begin{proof}[Proof of Claim~III]
We proceed by induction. Suppose that we have found the desired decomposition
up to $E_k$ and consider the mapping $f_{k+1}$ expressed in the new
frame of $E_k$ and the original frame for $E_{k+1}$. Thus (the matrix for)
$f_{k+1}$ is holomorphic at $x$ and globally meromorphic. Choose a minimal
number of columns of $f_{k+1}$ such that the restrictions to $E_k'$ generate
the stalk of $\Ker f_k$  at $x$. After a trivial reordering of the columns we may
assume that
$$
f_{k+1}=
\left(\begin{array}{cc}
f'_{k+1} &  \Phi' \\
\Psi  & \Phi'' 
\end{array}
\right)
$$
By Lemma~\ref{simple} there is a meromorphic matrix $a$, holomorphic and invertible
at $x$, such that 
$
\Phi'=f'_{k+1}a.
$
Therefore we can make the meromorphic change of frame
$$
\left(\begin{array}{cc}
f'_{k+1} &  \Phi' \\
\Psi  & \Phi'' 
\end{array}
\right)
\left(\begin{array}{cc}
I &  -a \\
0 & I 
\end{array}
\right)
=
\left(\begin{array}{cc}
f'_{k+1} &  0 \\
\Psi  & f''_{k+1} 
\end{array}
\right).
$$
We now claim that  
\begin{equation}\label{balla}
\Im f_{k+1}''=\Ker f_k''
\end{equation}
at $x$. By the lemma again we can then find a meromorphic matrix $a$,
holomorphic and invertible at $x$ such that 
$
\Psi=f_{k+1}''a,
$
and then after a similar meromorphic change of frame as before we get that
the mapping $f_{k+1}$ has the matrix
$$
\left(\begin{array}{cc}
f_{k+1}'&  0 \\
0 &  f_{k+1}''
\end{array}
\right)
$$
in the new frames.  Thus it remains to check \eqref{balla} which is indeed
a statement over the local ring $\Ok_x$ and therefore ``wellknown''. In any
case, for each  $z\in\Ker f_k''$ we can solve
$$
\left(\begin{array}{cc}
f'_{k+1} &  0 \\
\Psi  & f''_k 
\end{array}
\right)
\left(\begin{array}{c}
\xi\\
\eta  
\end{array}
\right)
=
\left(\begin{array}{c}
0\\
z  
\end{array}
\right).
$$
Since $f'_{k+1}$ is minimal this implies that $\xi$ is in the maximal ideal
at $x$ and hence $\Psi\xi$ is in the maximal ideal. Thus we can solve
$
f_{k+1}''=z-\alpha
$
with $\alpha$ in the maximal ideal for each $z\in\Ker f_k''$. However,
since $f''_\ell$ is a resolution of $0$ it follows that each
$\Ker f_k''$ is a free module. Expressed in a basis for $\Ker f_k''$
we can solve then
$
f_{k+1}''\eta=I-\alpha
$
and since $\alpha$ is in the maximal ideal it follows that
$I-\alpha$ is invertible; hence \eqref{balla} follows.
\end{proof}

We can now conclude the proof of Proposition~\ref{allan}.
Let us  equip the bundles  $E_k=E_k'\oplus E_k''$
with some  metrics that respect the decomposition, for instance the trivial
metric with respect to the ``new'' frame. Both 
$\Ok(E_k'), f_k'$ and the Koszul complex generated by $h$ are minimal
resolutions of $\F=\Ok/\J$ at $x$, and since  both of them have meromorphic extensions
to $\Omega$ by Claim~II  there is a meromorphic $g_p$, holomorphic  at $x$,  such that
$$
R'_p=g_p\dbar(1/h_p)\w\ldots\dbar(1/h_1).
$$
Here $R'_p$ is the current obtained from the resolution $f'_k$. 
If $\tilde R_p$ is the current with respect to the new metric, then
$$
\tilde R_p=
\left(\begin{array}{c}
g_p \\
0
\end{array}
\right)
\dbar\frac{1}{h_p}\w\ldots\w\dbar\frac{1}{h_1}
$$
with respect to the new frame, and hence we obtain the matrix for
$\tilde R_p$ with respect to the original frame after multiplying
with the matrix $G_p$. 
Notice that outside $Z_{p+1}$, the image of $f_{p+1}$ is a smooth (holomorphic)
subbundle
$H$ of $E_p$, and let $\pi$ be the orthogonal projection onto
the orthogonal complement (with respect to the original metric) of $H$.
Then, cf., \cite{AW1}, $R_p=\pi \tilde R_p$. Thus $\tau$ in
 \eqref{allaneq} is $\pi G_p (g_p\ 0)^T$, and since $\pi$ does not increase
norms,  the  estimate in Proposition~\ref{allan} follows.

\section{Examples}\label{exsec}

We explain what the currents $U$ and $R$ and 
our Koppelman formulas mean in the case of a reduced complete intersection.
We also illustrate the techniques of Section \ref{compsupp} where $\dbar$-closed extensions and 
solutions with compact support are considered.

Let $f_1,\ldots,f_p$ be holomorphic functions, defined in a 
suitable neighborhood of $\bar{\B}\subset \C^n$, and assume that $Z=\{f_1=\cdots=f_p=0\}$
has dimension $d=n-p$ and $df_1\w\ldots\w df_p\neq 0$ on $Z_{reg}$,
cf., Example~\ref{ulla}.
Then $R=R_p$ is given by \eqref{pdum} with $a$ replaced by $f$.

Let $h_j$ be Hefer $(0,1)$-forms so that
$\delta_{\eta}h_j=f_j(\zeta)-f_j(z)$ and let $\tilde{h}=\sum h_j\w e_j^*$;
recall that $\tilde{h}$ is a section of $\Lambda (A^* \oplus T^*(\Omega))$. 
The Hefer morphisms $H_k^{\ell}$ 
can be described as interior multiplication with $\tilde{h}^{k-\ell}/(k-\ell)!$ and
a straight forward computation shows that
\begin{equation*}
HR=H^0_pR_p=\dbar \frac{1}{f_p}\w \cdots \w \dbar \frac{1}{f_1}\w
h_1\w \cdots \w h_p= \gamma \lrcorner [Z]\w h,
\end{equation*} 
where $\gamma$ is a smooth $(p,0)$-vector field on $Z_{reg}$ such that
$\gamma \lrcorner df_p\w \cdots \w df_1=(2\pi i)^p$ and $h=h_1\w \cdots \w h_p$.
According to the proof of the Koppelman formula(s) above, our solution operator 
to $\dbar$ on $Z_{reg}$ is

\begin{equation}\label{ture}
\K\phi (z)=\int_{Z} \gamma \lrcorner [h \w (g\w B)_{d}]\w \phi. 
\end{equation}
and the projection operator is
\begin{multline}\label{gullan}
\Pr\phi = \int_{Z} \gamma \lrcorner [
h \w g_{d}]\w \phi 
= \int_{Z} \gamma \lrcorner \Big[h \w \frac{\bar{\zeta}\cdot d\zeta \w 
(d\bar{\zeta}\cdot d\zeta)^{d-1}}{(2\pi i(|\zeta|^2-z\cdot \bar{\zeta}))^{d}}\Big]
\w \dbar \chi \w \phi.
\end{multline}
Here  $g$ is the weight $g=\chi(\zeta)-\dbar \chi(\zeta)\w (\sigma/\nabla_{\eta}\sigma)$
from Example 2 and $B$ is the Bochner-Martinelli form associated with $\eta=\zeta-z$. 

In particular, the right hand side of \eqref{gullan} is a quite simple
representation formula for a strongly holomorphic function $\phi$ on $Z$. 

If $Z_{sing}$ is discrete, avoids the boundary, $\partial\B$, of the ball,
and $Z$ intersects $\partial\B$ transversally, then
we get back the representation formula for strongly holomorphic functions
of  Stout \cite{Stout} and Hatziafratis \cite{Hatzia} since then we may let $\chi$
in \eqref{gullan} be 
the characteristic function for $\B$ and the integral becomes an 
integral over $Z\cap \partial \B$. 

Suppose in addition that $d=1$. Let $\xi=\sum\xi_jd\eta_j$ be a form satisfying 
$\delta_{\eta}\xi=1$ outside $\Delta$,
e.g., $\xi= B_1$ or $\xi=\sigma$. For some function $C(z,\zeta)$ (a priori depending on $\xi$)
we have 
$h\w \xi=Cd\eta_1\w\cdots\w d\eta_n$. Applying $\delta_{\eta}$ to this equality we get
$(-1)^{n-1}h=C\delta_{\eta}(d\eta_1\w\cdots\w d\eta_n)$ for $(z,\zeta)\in Z\times Z$ since
$\delta_{\eta}h=0$ for such $(z,\zeta)$. From this we read off that $C\mid_{Z\times Z}$ is meromorphic,
independent of $\xi$, and 
with (at most) a first order singularity along the diagonal. We conclude that
$h\w (g\w B)_1=\chi Cd\eta_1\w\cdots\w d\eta_n$ and 
$h\w g_1=\dbar \chi \w Cd\eta_1\w\cdots\w d\eta_n$ on $Z\times Z$ and our solution kernels
$K$ and $P$ become
\begin{equation*}
K(z,\zeta)= \chi(\zeta)C(z,\zeta)\cdot(\gamma \lrcorner d\zeta),\quad
P(z,\zeta)= \pm \dbar \chi(\zeta) \w C(z,\zeta) \cdot (\gamma \lrcorner d\zeta).
\end{equation*} 
Notice that $\gamma \lrcorner d\zeta$ is a holomorphic $1$-form on $Z_{reg}$ since 
$\gamma \lrcorner [Z]=\dbar (1/f)$ there.
If $Z$ is the cusp $Z=\{f(z)=z_1^r-z_2^s=0\}\subset \C^2$,
where $r$ and $s$ are relativly prime integers $2\leq r<s$, one readily checks that 
\begin{equation*}
h=\frac{1}{2\pi i}\big(\frac{\zeta_1^r-z_1^r}{\zeta_1-z_1}d\eta_1-
\frac{\zeta_2^s-z_2^s}{\zeta_2-z_2}d\eta_2\big),
\quad
\frac{\gamma}{2\pi i}=\frac{r\bar{\zeta}_1^{r-1}\partial/\partial\zeta_1 -
s\bar{\zeta}_2^{s-1}\partial/\partial\zeta_2}{r^2|\zeta_1|^{2(r-1)}+s^2|\zeta_2|^{2(s-1)}}.
\end{equation*}
Using the paramertization $\tau \mapsto (\tau^s,\tau^r)=(\zeta_1,\zeta_2)$ of $Z$, 
a straight forward computation shows that 
$\gamma \lrcorner d\zeta_1\w d\zeta_2=2\pi id\tau/\tau^{(r-1)(s-1)}$,
yielding the following Cauchy formula 
\begin{equation*}
\phi(t)=\int_{|\tau|=\rho}\frac{\phi(\tau)C(\tau,t)d\tau}{\tau^{(r-1)(s-1)}}-
\lim_{\epsilon\to 0}\int_{\epsilon<|\tau|<\rho}\frac{\dbar \phi(\tau)\w C(\tau,t)d\tau}{\tau^{(r-1)(s-1)}},
\end{equation*}
on $Z$, where
\begin{equation*}
C(\tau,t)=\frac{1}{2\pi i}\frac{\tau^{rs}-t^{rs}}{(\tau^r-t^r)(\tau^s-t^s)}.
\end{equation*}

\medskip

Assume now,  cf., Section \ref{compsupp}, that
$Z_{sing}\subset K \subset \subset \B$ and 
let $\varphi$ be a smooth $\dbar$-closed $(0,q-1)$-form
on $Z\setminus K$. Let $\chi$ and $\tilde{\chi}$ be cutoff functions in $\B$ such that $\chi$
 is $1$  in a neighborhood of $K$
and $\tilde{\chi}$ is $1$ in a neighborhood of $\textrm{supp}(\chi)$.  
Put
\begin{equation*}
\tilde{g}=\tilde{\chi}(z)-\dbar \tilde{\chi}(z)\wedge \sum_1^n
\frac{\bar{z}\cdot d\eta\wedge (d\bar{z}\cdot d\eta)^{k-1}}{(2\pi i(|z|^2-\bar{z}\cdot \zeta))^k},
\end{equation*}
i.e., $\tilde{g}$ is the weight from Example \ref{alba} with $z$ and $\zeta$ interchanged.
Our formulas show that \eqref{ture}, with $g$ replaced by $\tilde{g}$ and $\phi$ replaced by 
$\dbar \chi\w \varphi$,
is a solution with compact support in $\B$ 
(and in fact also smooth across $Z_{sing}$) 
to the equation $\dbar u=\dbar \chi \w \varphi$ on $Z_{reg}$
provided that the corresponding projection term, cf., \eqref{gullan},
\begin{equation}\label{kockum}
-\dbar \tilde{\chi}(z)\w \int_Z \gamma \lrcorner \Big[h \w
\frac{\bar{z}\cdot d\zeta\wedge (d\bar{z}\cdot d\zeta)^{d-1}}{(2\pi i(|z|^2-\bar{z}\cdot \zeta))^d}
\Big]\w  \dbar\chi\w\varphi.
\end{equation}
vanishes.
Then $(1-\chi)\varphi +u$ is smooth and $\dbar$-closed on $Z$, and coincides with $\varphi$ outside 
a neighborhood in $Z$ of $K$. 
As long as $q<d$, \eqref{kockum}
is trivally zero;   if $q=d$, then it is clearly 
sufficient that 
\begin{equation}\label{oscar}
\int_Z \dbar\chi\w \varphi\, \xi \w  (\gamma\lrcorner d\zeta)=0, \quad \xi\in \mathcal{O}(Z).
\end{equation} 
On the other hand, if $\varphi$ has a smooth 
$\dbar$-closed extension, then \eqref{oscar} holds. In particular we see that if $\varphi$ is 
holomorphic on the regular part of the cusp $Z$, then $\varphi$ is strongly holomorphic if and only if
\begin{equation*}
\int_{|\tau|=\epsilon}\varphi \xi d\tau/\tau^{(r-1)(s-1)}=0, \quad \xi \in \mathcal{O}(Z).
\end{equation*}

\section{Solutions formulas with weights}\label{weightsec}

For the proof of Theorem~\ref{main2} we use extra weight factors. 
Let $A$ be any subvariety of $Z$ that contains $Z_{sing}$, in particular
$A$ may be $Z_{sing}$ itself.  Let $a$ be a holomorphic tuple in $\Omega$ 
such that $\{a=0\}\cap Z=Z_{sing}$, and let  $H^a$ be a holomorphic $(1,0)$-form in $\Omega$
such that
$
\delta_\eta H^1=a(\zeta)-a(z).
$
If $\psi$ is a $(0,q)$-form that
vanishes in a \nbh of $Z_{sing}$  we can incorporate the weight  
\begin{equation}\label{viktig}
g_a^\mu=\Big(\frac{a(z)\cdot a}{|a|^2}+H^a\cdot\dbar\frac{\bar a}
{|a|^2}\Big)^\mu
\end{equation}
in \eqref{koppelman3}, i.e., we use  the weight $g_a^\mu\w g$
instead of just  $g$,  the usual weight with compact
support that is holomorphic in $z$. 
Since the  operators in  Lemma~\ref{brum}
are  bounded on $L^p_{loc}$, we have that 
\begin{equation}\label{sallan2}
\psi=\dbar \int_{Z_{reg}} \gamma\lrcorner (H\w g^\mu_a\w g\w B)_n\w \psi +
\int_{Z_{reg}} \gamma\lrcorner(H\w g^\mu_a\w g\w B)_n\w \dbar\psi,
\end{equation}
for $(0,q)$-forms $\psi$,  $q\ge 1$, in $L^p(Z_{reg})$
that vanish   in a \nbh of $Z_{sing}$. 
If $\phi$ is as in Theorem~\ref{main2},  thus
 \eqref{sallan2} holds for
$\psi=\chi(|a|^2/\epsilon)\phi$ for each $\epsilon>0$.
For each  natural  number $\mu$ we get a solution when $\epsilon\to 0$ 
in view of  the asymptotic estimate of $|\gamma|$ if just
$N$ is large enough.
If $\mu$ is large, then the solution will
vanish to high order at $Z_{sing}$ and therefore
Theorem~\ref{main2} follows.

\section{Solutions with compact support}\label{compsupp}

Theorems \ref{portensats}, \ref{main3}, and \ref{main3x} are Hartogs type theorems, because solvability
of $\dbar\psi=\phi$ in $X_{reg}$ roughly speaking means that $\psi$ has a $\dbar$-closed
smooth extension across $X_{sing}$. As usual therefore the proofs rely on the 
possibility to solve the $\dbar$-equation with compact support.

\smallskip
To begin with we assume that $Z$ is defined in a \nbh of the closed unit ball
$\overline\B$. Since the depth of $\Ok/\J$ is at least  $\nu$ we can choose,
see, e.g., \cite{Eis1},
a resolution \eqref{acomplex} with $N=n-\nu$,  and   the associated residue
current then is $R=R_p+\cdots +R_{n-\nu}$. Notice that $\dbar R_{n-\nu}=0$.

\begin{prop}\label{badanka2}
Let $Z$ be a subvariety of a \nbh of  $\overline \B$ with the single singular point $0$.  
Assume that $\phi$ is a smooth  $\dbar$-closed $(0,q)$-form
in  $Z\cap \B\setminus \overline{\B_\epsilon}$.
\smallskip

\noindent (i) If $q\le \nu-2$ there is a smooth $\dbar$-closed form $\Phi$ in 
$Z\cap \B$ that coincides with $\phi$ outside a \nbh in $Z$ of
$Z\cap\overline{\B_\epsilon}$.

\smallskip
\noindent (ii)  If $q=\nu-1$ the same is true if and only if 
\begin{equation}\label{valfrid}
\int R_{n-\nu}\w \dbar\chi\w h\phi \w d\zeta=
\int_Z\dbar\chi\w h \phi \w (\gamma_{n-\nu}\lrcorner d\zeta)=0,  \quad h\in\Ok(\B),
\end{equation}
if $\chi$ is a cutoff function in $\B$ that is $1$ in a \nbh
of $\overline{\B_\epsilon}$.
\end{prop}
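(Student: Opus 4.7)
The plan is to construct the desired extension by finding a compactly supported solution to $\dbar u = \dbar\chi\w\phi$ via the Koppelman formula with a suitable weight, and then taking $\Phi := (1-\chi)\phi + u$. Choose $\chi\in C^\infty_c(\B)$ equal to $1$ in a \nbh of $\overline{\B_\epsilon}$ and set $f := \dbar\chi\w\phi$. This $f$ is a smooth, compactly supported $(0,q+1)$-form that vanishes in a \nbh of $0\in Z_{sing}$, and $\dbar f = 0$ since $\dbar\phi = 0$ on $\supp\,\dbar\chi$. Using the ``conjugated'' weight $\tilde g$ from the examples section (i.e., the weight of Example~\ref{alba} with the roles of $z$ and $\zeta$ swapped), whose coefficient carries a factor $\tilde\chi(z)$ or $\dbar\tilde\chi(z)$ for a cutoff $\tilde\chi$ equal to $1$ on a \nbh of $\supp\,\chi$, the Koppelman formula \eqref{koppelman3} applied to $f$ collapses to $f = \dbar\K f + \Pr f$, and the resulting $u := \K f$ automatically vanishes outside $\supp\,\tilde\chi$. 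The problem thus reduces to showing $\Pr f = 0$ in case (i), and that $\Pr f = 0$ is equivalent to \eqref{valfrid} in case (ii).

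The analysis of $\Pr f$ is a bidegree count. Since $\Ok/\J$ has depth at least $\nu$, the resolution \eqref{acomplex} can be chosen with $N = n-\nu$, so that only $R_p,\ldots,R_{n-\nu}$ contribute. By the explicit shape of $\tilde g$ one checks that $\tilde g_{n-k}$ has $d\bar z$-degree $n-k$, so the $k$-summand in $\Pr f$ has $d\bar z$-degree $n-k$. To produce a nontrivial $(0,q+1)$-form in $z$ one needs $k = n-q-1$. In case (i), $q\le\nu-2$ forces $k\ge n-\nu+1$, outside the range of nonvanishing components, so $\Pr f\equiv 0$ and $u = \K f$ is the desired solution. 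In case (ii), $q=\nu-1$ and $k=n-\nu$ is the top index, so $\Pr f$ reduces to a single term built from the $\dbar$-closed current $R_{n-\nu}$ paired against $f$ and a kernel that is holomorphic in $z$ away from $\dbar\tilde\chi(z)$. Unwinding $\tilde g_\nu$ together with the entries of the Hefer morphism $H^0_{n-\nu}$ shows that requiring $\Pr f\equiv 0$ as a form in $z$ is the same as the family of scalar conditions \eqref{valfrid} with $h$ ranging over $\Ok(\B)$.

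The converse in (ii) is a short Stokes argument. Given an extension $\Phi$, set $v := \Phi - (1-\chi)\phi$, which is smooth and compactly supported in $\B$ and satisfies $\dbar v = f$. From $\nabla_f R = 0$ and $R_{n-\nu+1}=0$ one gets $\dbar R_{n-\nu} = 0$, so for any $h\in\Ok(\B)$,
\[
\int R_{n-\nu}\w\dbar\chi\w h\phi\w d\zeta = \int R_{n-\nu}\w h\w \dbar v\w d\zeta = 0,
\]
yielding \eqref{valfrid}. It remains to verify the properties of $\Phi = (1-\chi)\phi + u$. Smoothness on $Z_{reg}$ follows from the Koppelman formalism; smoothness across $0$ holds because $f$ vanishes in a \nbh of $0$, so in $\K f(z) = \int K(\zeta,z)\w f(\zeta)$ the diagonal singularity $\zeta=z$ lies outside $\supp\,f$ when $z$ is near $0$. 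One has $\dbar\Phi = -\dbar\chi\w\phi + \dbar u = -f + f = 0$, and $\Phi = \phi$ outside $\supp\,\tilde\chi$ (a \nbh of $\supp\,\chi$, hence of $Z\cap\overline{\B_\epsilon}$) because $(1-\chi)\phi = \phi$ there and $u=0$.

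The main obstacle is the identification in case (ii) of $\Pr f\equiv 0$ with the explicit family \eqref{valfrid}: one must argue that the holomorphic $z$-dependence in the relevant component of $\tilde g_\nu$, combined with $H^0_{n-\nu}$, effectively tests the current $R_{n-\nu}\w\dbar\chi\w\phi\w d\zeta$ against all of $\Ok(\B)$, so that no additional restrictions beyond the stated moment condition are imposed.
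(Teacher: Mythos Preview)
Your proof is correct and follows essentially the same route as the paper: solve $\dbar u=f$ with $f=\dbar\chi\wedge\phi$ using the Koppelman formula with the weight from Example~\ref{alba} after swapping $z$ and $\zeta$, then set $\Phi=(1-\chi)\phi+u$; the degree count isolating $R_{n-q-1}$ and the Stokes argument for necessity are both right.

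The one point worth flagging is your final paragraph. You identify as the ``main obstacle'' the full equivalence $\Pr f\equiv 0 \Longleftrightarrow$ \eqref{valfrid}, worrying that one must show the kernel effectively tests against \emph{all} of $\Ok(\B)$. But you do not need this hard direction. For sufficiency you only need \eqref{valfrid} $\Rightarrow\Pr f=0$, and that is immediate: in the surviving term
\[
\Pr f(z)=\pm\,\dbar\tilde\chi(z)\wedge\int H^0_{n-\nu}R_{n-\nu}\wedge
\frac{\bar z\cdot d\zeta\wedge(d\bar z\cdot d\zeta)^{\nu-1}}
{(2\pi i(|z|^2-\bar z\cdot\zeta))^{\nu}}\wedge\dbar\chi\wedge\phi,
\]
the Hefer morphism $H^0_{n-\nu}$ and the kernel are holomorphic in $\zeta$, so for each fixed $z$ the $\zeta$-integral is exactly of the form in \eqref{valfrid} for some particular $h\in\Ok(\B)$. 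Necessity of \eqref{valfrid} you already establish directly via Stokes, bypassing $\Pr$ entirely. So the obstacle you flag never actually arises; this is also how the paper proceeds (``$\Pr\phi$ vanishes if \eqref{valfrid} holds, keeping in mind that $H$ is holomorphic in the ball'').
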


Notice that  \eqref{valfrid}  holds for all such $\chi$ if it holds
for one single $\chi$. 

\begin{proof}
First notice  that if $q=\nu-1$ and the extension $\Phi$ of $\phi$
exists, then choosing $\chi$ such that $\Phi=\phi$ on the support
of $\dbar\chi$ we have that
$$
R_{n-\nu}\w \dbar\chi\w h\Phi\w d\zeta=
d(R_{n-\nu}\w \chi\w h\Phi\w d\zeta)
$$
and since $R_{n-\nu}\w \chi\w h\Phi\w d\zeta$ has compact support
\eqref{valfrid} must hold.

If  $\chi$ is as  in the theorem, then $(1-\chi)\phi$ is a smooth
extension of $\phi$ across $\overline{\B_\epsilon}$, and to find the
$\dbar$-closed extension we have to solve $\dbar u=f$
with compact support, where $f=\dbar\chi\w\phi$. 
To this end, 
let $\tilde\chi$ be a cutoff function that is $1$ in a \nbh of 
a closed ball that contains the
support of $f$ and let  $g$ be the weight from
Example~\ref{alba}  with this choice of $\tilde\chi$ but with 
$z$ and $\zeta$ interchanged. It does not have
compact support with respect to $\zeta$, but  since $f$ has compact
support itself we still  have the Koppelman formula \eqref{koppelman3}.
Clearly 
$$
u(z)=\int (HR\w g\w B)_n\w  f
$$
has support in a \nbh of the support of $f$, 
and it follows from Koppelman's formula that it is indeed a solution if 
the associated integral $\Pr f$ vanishes. However, since now $s$
is holomorphic in $\zeta$,  for degree reasons we have that
\begin{multline*}
\Pr f(z)=\int (HR\w g)_n\w f = \pm \dbar\tilde{\chi}(z)\w \int HR_{n-q-1}\w
s\w (\dbar s)^q\w\dbar\chi\w\phi \\
=\pm \dbar\tilde{\chi}(z)\w \int HR_{n-q-1}\w
\frac{\bar z\cdot d \zeta\w(d\bar z\cdot d\zeta)^{q}}
{(2\pi i(|z|^2-\bar z\cdot\zeta))^{q+1}}\w \dbar\chi\w\phi.
\end{multline*}
If  $q<\nu-1$, then this integral vanishes since then $R_{n-q-1}=0$.
If $q=\nu-1$, then $\Pr\phi$  vanishes if \eqref{valfrid} holds, keeping in mind
that $H$ is holomorphic in the ball. 
Since $f=0$ in a \nbh of $0$ in $Z$ we have that
$u$ is smooth, and 
$\Phi=(1-\chi)\phi+u$ is the desired $\dbar$-closed extension.
\end{proof}

In particular we have proved a simple case of
Theorem~\ref{portensats} and we obtain 
the general case along the same lines.

\begin{proof}[Proof of Theorem~\ref{portensats}]
Since $X$ can be exhausted by holomorphically convex subsets each of which
can be embedded in some affine space, we can assume from the beginning that
$X\subset\Omega\subset\C^n$, where $\Omega$ is pseudoconvex.
Let $\omega\subset\subset\Omega$ be a holomorphically convex open set in $\Omega$ that
contains  $K$. Let $\chi$ be a cutoff function in $\omega$ that is
$1$ in a \nbh of $K$.
Choose a cutoff function $\tilde\chi$ that is $1$ in a \nbh of the 
holomorphically convex hull of the support
of $f$  and let $g$ be the weight from
Example~\ref{alba}  with this choice of $\tilde\chi$ but with 
$z$ and $\zeta$ interchanged.  As in the previous proof 
we get a solution with support in $\omega$, 
provided that the  corresponding projection term $\Pr f$ 
vanishes. If $\nu>1$,  then  $\Pr f$ vanishes automatically and if
$\nu=1$, then $\Pr f=0$ if
\begin{equation}\label{alla}
\int R_{n-1}\w d\zeta \w\dbar\chi\w\phi h=
\pm \int_{X}\dbar\chi\w \phi h \w (\gamma_{n-1}\lrcorner d\zeta)=0
\end{equation}
for all $h\in\Ok(\omega\cap X)$,  and by approximation
it is enough to assume that \eqref{alla}  holds for $h\in\Ok(X)$,
i.e., that \eqref{moment} holds.

Since  $X_{sing}$ is not contained in $K$, our solution
$u$ is, outside of $K$, only defined on $X_{reg}$.
Therefore
$\Phi=(1-\chi)\phi+u$ is holomorphic in $X_{reg}$, in a
\nbh of $K$, and outside $\omega$. Since
$X_{reg}^\ell\setminus K$ is connected, $\Phi=\phi$ 
there.  
(Even without  the connectedness assumptions it follows that
$\Phi$ is in $\Ok(X)$, since it has at most polynomial growth
at $Z_{sing}$, hence  is meromorphic and its pole set is contained
in $\omega\cap X$.)
The necessity of the moment condition follows
as in the previous proof.
\end{proof}

\begin{ex}\label{hartogsex}
Let $X\subset \C^2$ be an irreducible curve with one transverse self intersection at $0\in \C^2$.
Close to $0$, $X$ has two irreducible components, $A_1$, $A_2$, each isomorphic to a disc in $\C$.
Let $K\subset A_1$ be a closed annulus surrounding the intersection point $A_1\cap A_2$. Then 
$X\setminus K$ is connected but $X_{reg} \setminus K$ is not. Denote 
the ``bounded component'' of $A_1\setminus K$ by $U_1$ and put $U_2=X\setminus (K\cup U_1)$.
Let $\tilde{\phi}\in \mathcal{O}(X)$ satisfy $\tilde{\phi}(0)=0$ and define $\phi$ to be $0$
on $U_1$ and equal to $\tilde{\phi}$ on $U_2$. Then $\phi\in \mathcal{O}(X\setminus K)$ and a straight
forward verification shows that $\phi$ satisfies the compatibility condition \eqref{moment}; cf.\ also 
\eqref{oscar}. But clearly, $\phi$ cannot be extended to a strongly holomorphic function on $X$.     
\end{ex}

We now consider the case when $X_{sing}$ has positive dimension more closely. 
Locally we have an analogue of Proposition~\ref{badanka2}.
For convenience  we first  consider  the technical part
concerning  solutions  with compact support.

\begin{prop}\label{badanka3}
Let $Z$ be an analytic set defined in a neighborhood of $\bar{\B}\subset\C^n$,
let $x\in Z_{sing}$, and let $a$ be a holomorphic tuple such that
$Z_{sing}=\{a=0\}$ in a neighborhood of $x$ and let $d'=\dim Z_{sing}$.
Assume that $f$ is a smooth $\dbar$-closed $(0,q)$-form in a
neighborhood of $x$ with $f=0$ close to $Z_{sing}$ and with f supported in
$\{|a|<t\}$ for some small $t$.
\begin{itemize}

\item[(i)] If $1\leq q \leq \nu-d'-1$, then in a neighborhood $U$ of $x$ one can
find a  smooth $(0,q-1)$-form, $u$, with support in $\{|a|<t\}$
and $\dbar u =f$ in $U\cap Z_{reg}$.

\item[(ii)] If $q=\nu-d'$, then one can find such a solution if and only if

\begin{equation}\label{eqmoment}
\int R_{n-\nu}\wedge h\wedge f=
\pm\int_{Z}f\w h\w (\gamma_{n-\nu}\lrcorner d\zeta)=0
\end{equation}
for all smooth $\dbar$-closed $(0,d')$-forms, $h$, such that
$\mbox{supp}(h)\cap \{|a|\leq t\}$ is compact.
\end{itemize}
\end{prop}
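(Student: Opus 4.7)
The plan is to follow the proof of Proposition~\ref{badanka2} closely, adapting the integration weight from the ball to the sublevel set $\{|a|<t\}$.

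Embed $Z$ near $x$ as a subvariety of a pseudoconvex $\Omega\subset\C^n$, let $H^a$ be a Hefer form for $a$ (so $\delta_\eta H^a=a(\zeta)-a(z)$), and choose $\tilde\chi(z)$ with $\tilde\chi\equiv 1$ on a \nbh of $\supp(f)$ and $\supp(\tilde\chi)\subset\{|a(z)|<t\}$. Build the Example~\ref{alba}-type weight
$$\tilde g=\tilde\chi(z)-\dbar\tilde\chi(z)\wedge\tilde\sigma/\nabla_\eta\tilde\sigma,\qquad\tilde\sigma=\frac{\bar a(z)\cdot H^a}{|a(z)|^2-\bar a(z)\cdot a(\zeta)},$$
so that $\delta_\eta\tilde\sigma=1$; the denominator is uniformly bounded away from zero on $\supp(\dbar\tilde\chi)\times\supp(f)$ after arranging the transition of $\tilde\chi$ to occur where $|a(z)|$ exceeds $\max_{\supp(f)}|a(\zeta)|$. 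Define the candidate
$$u(z)=\int(HR\wedge\tilde g\wedge B)_n\wedge f,$$
whose support lies in $\supp(\tilde\chi)\subset\{|a|<t\}$, and which by Proposition~\ref{koppelmanthm} satisfies $\dbar u=f-\Pr f$ on $U\cap Z_{reg}$ with $\Pr f=\int(HR\wedge\tilde g)_n\wedge f$.

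The key step is to show $\Pr f\equiv 0$ (or, in the remaining subcase, that it vanishes by the moment condition). Each $\tilde g_\ell$ has $d\eta$-degree $\ell$ and $z$-bidegree $(0,\ell)$, so requiring $\Pr f$ to be $(0,q)$ in $z$ forces $\ell=q$ and $k=n-q$ in the pairing $HR_k\wedge\tilde g_q$. Since $f$ vanishes in a \nbh of $Z_{sing}$ and $\supp R_k\subset Z_k\subset Z_{sing}$ for every $k>p$ by~\eqref{olvon}, the distributional pairing $R_k\wedge f=0$ for all $k>p$; and $R_k=0$ outright for $k>N=n-\nu$. Consequently $\Pr f\equiv 0$ except possibly when $n-q=p$, i.e., $q=d$. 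In case (i) one has $q\le\nu-d'-1<d$ (since $\nu\le d$ and $d'\ge 0$), so $\Pr f\equiv 0$ unconditionally and $u$ solves $\dbar u=f$. In case (ii), $q=\nu-d'\le d$, with $q=d$ possible only when $\nu=d$ and $d'=0$.

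In the generic subcases of (ii) ($q<d$) the argument yields $\Pr f\equiv 0$ regardless of any moment condition, so $u$ exists unconditionally; necessity of the moment condition~\eqref{eqmoment} follows from a Stokes identity: given such a $u$, for any admissible $h$,
$$\int R_{n-\nu}\wedge h\wedge f=\int R_{n-\nu}\wedge h\wedge\dbar u=\pm\int\dbar(R_{n-\nu}\wedge h\wedge u)=0,$$
using $\dbar R_{n-\nu}=0$ (top residue in the resolution), $\dbar h=0$, and the compact-support hypotheses to kill the boundary. In the remaining CM-with-isolated-singularity subcase ($q=d$) the analysis of Proposition~\ref{badanka2}(ii) applies verbatim, giving an explicit $\Pr f(z)=\pm\dbar\tilde\chi(z)\wedge\int R_p\wedge K(\zeta,z)\wedge f$ with $K$ holomorphic in $\zeta$, and~\eqref{eqmoment}---which then ranges over holomorphic test functions---is equivalent to $\Pr f\equiv 0$. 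The main obstacle is the careful verification of the distributional identity $R_k\wedge f=0$ for $k>p$ inside the Koppelman integral (reducing the contributing residue to $R_p$ alone), combined with the bookkeeping of smoothness and support of $u$ on $U\cap Z_{reg}$ given that the kernel is singular at $Z_{sing}$ while $f$ is supported away from it.
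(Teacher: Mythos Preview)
Your argument has two genuine gaps.

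First, and most seriously, your single weight $\tilde g$ has compact support in $z$ but imposes no restriction whatsoever on $\zeta$. Since $f$ is only defined in a neighborhood of $x$ and, when $d'\ge 1$, its support in $\{|a|<t\}$ extends noncompactly along $Z_{sing}$, the integral $\K f=\int(HR\wedge\tilde g\wedge B)_n\wedge f$ is simply not defined: Proposition~\ref{koppelmanthm} requires the weight (or the form) to have compact support in $\zeta$. The paper remedies this by introducing a \emph{second} weight $g$, of the Example~\ref{alba} type but built only from auxiliary coordinates $\zeta'$ (chosen so that locally $Z_{sing}\subset\{|z''|\le|z'|\}$). This weight has compact support in $\zeta'$, and combined with the $\{|a|<t\}$-support of $f$ it makes the integrand compactly supported in $\zeta$. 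The actual solution operator is $\K f=\int(HR\wedge g^a\wedge g\wedge B)_n\wedge f$, where $g^a$ is essentially your $\tilde g$.

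Second, your claim that $\supp R_k\subset Z_k\subset Z_{sing}$ for $k>p$ is false: by Proposition~\ref{regular} one has $R_k=\gamma_k\lrcorner[Z]$ on $Z_{reg}$ with $\gamma_k$ smooth and generically nonzero there, so $\supp R_k=Z$ for every $p\le k\le N$. Hence ``$R_k\wedge f=0$ because $f$ vanishes near $Z_{sing}$'' is invalid. The correct reason $\Pr f$ vanishes in case~(i) is a $d\bar\zeta$-degree count with the paper's two weights: $HR$ contributes at most $n-\nu$, the weight $g^a$ contributes $0$ (it is holomorphic in $\zeta$), the localization weight $g$ contributes at most $d'$ (being built from the $d'$ variables $\zeta'$), and $f$ contributes $q$; thus $(HR\wedge g^a\wedge g)_n\wedge f$ can reach top $d\bar\zeta$-degree only when $q\ge\nu-d'$.

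The second weight is not a mere technicality: it is precisely what shifts the critical degree from $\nu$ (the value your single-weight count would give, via $R_k=0$ for $k>n-\nu$) down to $\nu-d'$, and hence what produces the genuine obstruction in case~(ii). Without it your own counting would yield $\Pr f\equiv 0$ unconditionally for every $q<\nu$, which for $d'\ge 1$ contradicts the ``only if'' direction of~(ii)---a clear signal that the integral you wrote down is ill-posed rather than that the obstruction is illusory.
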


\begin{proof}
Let $\chi_a$ be a cutoff function in $\B$, which in a neighborhood of $x$
satisfies that $\chi_a=1$ in a neighborhood of the support of $f$ and
$\chi_a=0$ in a neighborhood of $\{|a|\geq t\}$. Let also
$H^a$ be a holomorphic $(1,0)$-form, as in the previous section, and define

\begin{equation*}
g^a=\chi_a(z)-\dbar\chi_a(z)\wedge \frac{\sigma_a}{\nabla_{\eta}\sigma_a},\,\,\,
\sigma_a=\frac{\overline{a(z)}\cdot H^a}{|a(z)|^2-a(\zeta)\cdot \overline{a(z)}}.
\end{equation*}
Then $g^a$ is a smooth weight for $\zeta$ in the support of $f$.
Close to $x$ we can choose coordinates
$(z',z'')=(z_1',\ldots,z'_{d'},z_1'',\ldots,z_{p+r}'')$ centered at $x$ so
that
$Z_{sing}\subset \{|z''|\leq |z'|\}$. Since $f$ is supported close to
$Z_{sing}$ we can choose a function $\chi=\chi(\zeta')$, which is $1$
close to
$x$ and $f\chi$ has compact support. Let now
$g=\chi-\dbar\chi\wedge \sigma/\nabla_{\eta}\sigma$ be the weight from
Example 2 but built from $z'$ and $\zeta'$. Our Koppelman formula now gives
that
\begin{equation*}
u=\K f=\int (HR\wedge g^a \wedge g\wedge B)_n\wedge f
\end{equation*}
has the desired properties provided that the obstruction term
\begin{equation*}
\Pr f=\int (HR\wedge g^a\wedge g)_n\wedge f
\end{equation*}
vanishes. Since $g$ is built from $\zeta'$, $g$ has at most degree $d'$ in
$d\bar{\zeta}$. Moreover,
$HR$ has at most degree $n-\nu$ in $d\bar{\zeta}$ and
$g^a$ has no degree in $d\bar{\zeta}$. Thus, if $q<\nu-d'$, then
$(HR\wedge g^a\wedge g)_n\wedge f$ cannot have degree $n$ in $d\bar{\zeta}$
and so $\Pr f=0$ in that case. This proves (i).

To show (ii), note that if $q=\nu-d'$, then 
\begin{equation*}
\Pr f=\chi_a(z)\int HR_{n-\nu}\wedge g_{d'}\wedge f.
\end{equation*}
Now, $H$ depends holomorphically on $\zeta$ and $g_{d'}$ is
$\dbar$-closed since it is the top degree term of a weight. Also, $g$ has
compact support in the $\zeta'$-direction, so $\mbox{supp}(g)\cap\{|a|\leq
t\}$
is compact and thus $\Pr f=0$ if \eqref{eqmoment} is fulfulled.
On the other hand, it is clear that the existence of a solution
with support in $\{|a|<t\}$ implies \eqref{eqmoment}.
\end{proof}


\begin{proof}[Proof of Theorem~\ref{main3}] 
We first assume that 
$\Omega=\B$ and $Z\subset\Omega$ has the single singular point $0$.
If $q=0<\nu-1$ (or $q=0=\nu-1$ and \eqref{valfrid} holds), then
it is clear from Proposition~\ref{badanka2} that $\phi$ is strongly
holomorphic.

Fix $r<1$ and let  $K_\ell=Z\cap(\overline\B_r\setminus\B_{1/\ell})$.
If now $q<\nu-1$
it follows from Proposition~\ref{badanka2} that there is a
$\dbar$-closed form  $\Phi_\ell$ in a \nbh in $Z$ of  $\overline\B_r\cap Z$
that coincides with $\phi$ in a \nbh of $K_\ell$, and
by Theorem~\ref{main} we therefore have  a smooth solution $u_\ell'$
to $\dbar u_\ell'=\phi$ in a \nbh of $K_\ell$.
Now $u_{\ell+1}'- u_\ell'$ is a $\dbar$-closed $(0,q-1)$-form
in a \nbh of $K_\ell$ and thus there is a global smooth $\dbar$-closed
form $w_\ell$ that coincides with $u_{\ell+1}'- u_\ell'$
in a \nbh of $K_\ell$. 
If we let $u_k=u_k'-(w_1+\cdots +w_{k-1})$ then $u=\lim u_k$ exists
and solves $\dbar u=\phi$ in $Z\cap \B_r\setminus\{0\}$.

Notice that if the desired solution exists, then \eqref{valfrid}
must be fulfilled.


\smallskip
Assume now that $X$ is an analytic space with
arbitrary singular set. 
Arguing as in the proof of the case $\dim X_{sing}=0$ above, we can conclude
from Proposition~\ref{badanka3}: {\it Given a point $x$ there is a \nbh
$U$ such that if $\phi$ is a $\dbar$-closed smooth 
$(0,q)$-form  in $U\cap X_{reg}$, $0\le q<\nu-d'-1$,  then 
$\phi$ is strongly holomorphic if $q=0$ and exact in
$X_{reg}\cap U'$,  for a  possibly
slightly smaller \nbh $U'$ of $x$,  if $q\ge 1$.}

\smallskip
We define the analytic sheaves
$\F_k$ on $X$ by 
$\F_k(U)=\E_{0,k}(U\cap X_{reg})$ for open sets $U\subset X$. Then
$\F_k$ are fine sheaves and 
\begin{equation}\label{karv}
0\to\Ok_X \to\F_0\stackrel{\dbar}{\longrightarrow}\F_1
\stackrel{\dbar}{\longrightarrow}\F_2\stackrel{\dbar}{\longrightarrow} \cdots
\end{equation}
is exact for $k<\nu-d'-1$.
It follows that 
$$
H^k(X,\Ok_X)=\frac{\Ker_{\dbar} \F_k(X)}
{\dbar \F_{k-1}(X)}
$$
for $k<\nu-d'-1$. Hence Theorem~\ref{main3} follows. 
\end{proof}

\begin{proof}[Proof of Theorem~\ref{main3x}]
We first assume that $X\subset\Omega\subset\C^n$
has an isolated  singularity at $0$. After a linear change of
coordinates in $\C^n$, and shrinking $\Omega$,
we may assume that the $d$-tuple
$a(z)=(z_1,\ldots, z_d)$ vanishes only at $0$ on $X$.
Let $U_\ell=\{|a|<2^{-\ell}\}\cap \Omega$. 
We claim that if $f$ is a smooth $(0,d)$-form 
in $U_{\ell}\setminus \{0\}$,
with support in $U_{\ell}$, then there is a smooth form $v_\ell$ such that
$f-\dbar v_\ell$ has support in $U_{\ell+1}$ and 
$v_\ell$ together with its derivatives up to order $\ell$ are bounded by
$2^{-\ell}$ outside $U_\ell$.

From the beginning we assume that $\phi$ has support in $U_1$.
Taking the claim for granted we choose inductively 
$f$ as $\phi-\dbar v_1-\ldots - \dbar v_{\ell-1}$, and we then obtain a solution
$
v=v_1+v_2+\ldots
$
in $U\setminus\{0\}$ to $\dbar v=\phi$.

To see the claim we use the weight \eqref{viktig} 
but with $z$ and $\zeta$ interchanged, i.e.,
$$
g^\mu=(\sigma(z)\cdot a(\zeta)+\dbar\sigma(z)\cdot H^1)^\mu,
$$
where $\sigma=\bar a/|a|^2$.
After a small modification we may assume that $f$ 
vanishes identically in a \nbh of $0$. Then since
$f$ has support in $U_\ell$,
$$
\K f(z)=\int (HR\w g^\mu\w B)_n\w f
$$
together with a finite number of derivatives will be small outside
$U_\ell$ if $\mu$ is chosen large enough. As before it is smooth
since $f=0$ close to $Z_{sing}$.
Moreover it is a solution, because
$$
\Pr f(z)=\int (HR\w g^\mu)_n\w f
$$
will vanish for degree reasons since $\dbar\sigma_1\w\ldots\w\dbar\sigma_{d}=0$.

\smallskip
Finally assume that $X$ is a general Stein space. 
Since we can solve $\dbar u=\phi$
in a \nbh of each singular point, we can find a global  $u$ such that
$f=\phi-\dbar u$ is smooth and vanishes in a  \nbh of $X_{sing}$. By
Theorem~\ref{mainglobal} we can solve $\dbar v=f$ on $X_{reg}$
and thus $\dbar (v+u)=\phi$ in $X_{reg}$. 
\end{proof}

\section{Meromorphic and strongly holomorphic functions}\label{merosec}

A meromorphic function $\phi$ on $Z\subset\Omega$ can be represented
by a meromorphic $\Phi$ in the ambient space
that is generically holomorphic on
$Z_{reg}$. Let $R$ be the residue current associated with $Z$. 
We show in \cite{A13} that $R\phi$ is well-defined
for any meromorphic $\phi$.   In fact, it can be defined  as
the  analytic continuation to $\lambda=0$ of the current
$|h|^{2\lambda}\Phi R,$
if $\Phi$ is a representative of $\phi$
in the ambient space and $h$ is a holomorphic function in $\Omega$ such that $h\Phi$ is
holomorphic and generically non-vanishing on $Z$.
One also has a well-defined current
$$
R\w\dbar\phi=-\nabla_f(R \phi)=\dbar|h|^{2\lambda}\w R\phi|_{\lambda=0}
$$
with support on the pole  set $P_\phi$ of $\phi$.

In \cite{A13} we proved the following result that
generalizes a previous result by  Tsikh in the case of
a complete intersection, see \cite{Ts} and \cite{HP}.

\begin{thm}
If $\phi$ is meromorphic on $Z$, then
$\phi$ is strongly holomorphic  if and only if
$
 R\w\dbar\phi=0.
$
\end{thm}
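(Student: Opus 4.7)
The forward direction is essentially immediate from the definition recalled just above the statement: a strongly holomorphic $\phi$ admits a holomorphic representative $\Phi$ in the ambient space, so one may take $h\equiv 1$ in that definition; the family $|h|^{2\lambda}\Phi R=\Phi R$ is then independent of $\lambda$ and $\dbar|h|^{2\lambda}\wedge R\phi\equiv 0$, giving $R\wedge\dbar\phi=0$.

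For the converse, the plan is to extend the Koppelman formula \eqref{koppelman2} to meromorphic $\phi$ by analytic continuation, using the compactly supported weight $g$ of Example~\ref{alba}. Fix a representative $\Phi$ of $\phi$ in a pseudoconvex ambient set $\Omega\supset\supset\omega$ and a holomorphic $h$ on $\Omega$ such that $h\Phi$ is holomorphic and does not vanish identically on any irreducible component of $Z$. For $\mathrm{Re}\,\lambda$ sufficiently large, $|h|^{2\lambda}\Phi$ is a $C^{N+1}$ function on $Z$, and since $\Phi$ is holomorphic off $\{h=0\}$ we have $\dbar(|h|^{2\lambda}\Phi)=\dbar|h|^{2\lambda}\wedge\Phi$. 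Applying the $C^{N+1}$-version of \eqref{koppelman2} from Corollary~\ref{maincor} then gives
\begin{equation*}
|h|^{2\lambda}\Phi(z)=\int(HR\wedge g\wedge B)_n\wedge\dbar|h|^{2\lambda}\wedge\Phi+\int(HR\wedge g)_n\wedge|h|^{2\lambda}\Phi
\end{equation*}
for $z\in Z_{reg}\cap\omega$. The value at $\lambda=0$ of the left-hand side is $\phi(z)$, the value at $\lambda=0$ of the second term on the right is $\Pr\phi(z)$, and, by the very definition of $R\wedge\dbar\phi$, the value at $\lambda=0$ of the first term is the natural pairing of the current $R\wedge\dbar\phi$ with the smooth kernel $H\wedge g\wedge B$. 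Under the hypothesis $R\wedge\dbar\phi=0$ this last contribution vanishes, so $\phi=\Pr\phi$ on $Z_{reg}\cap\omega$. Since $g$ is holomorphic in $z$, so is the kernel of $\Pr$, and $\Pr\phi$ is therefore an ambient holomorphic function on $\omega$; this is precisely the strong holomorphicity of $\phi$ on $\omega$. Since $\omega\subset\subset\Omega$ was arbitrary, $\phi$ is strongly holomorphic on $Z$.

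The main technical obstacle is the justification of the $\lambda$-regularization inside the Koppelman kernels: one must verify that $\int(HR\wedge g\wedge B)_n\wedge\Phi\wedge\dbar|h|^{2\lambda}$ admits a current-valued analytic continuation to a neighborhood of $\lambda=0$ whose value there coincides with the pairing of $R\wedge\dbar\phi$ against the smooth kernel, and similarly that $\int(HR\wedge g)_n\wedge|h|^{2\lambda}\Phi$ continues to $\Pr\phi$. I would carry this out by mimicking the limit analysis in the proof of Proposition~\ref{koppelmanthm}, combining the pseudomeromorphic calculus for $R$ from \cite{AW2} with the corresponding calculus for $R\phi$ developed in \cite{A13}, and using Proposition~\ref{regular} together with the explicit structure of $H\wedge g\wedge B$ near the diagonal to control the contributions from $Z_{sing}$ and from the pole set $\{h=0\}$ separately.
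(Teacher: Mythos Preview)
Your proposal is correct and follows essentially the same route as the paper: apply the Koppelman representation to $|h|^{2\lambda}\Phi$ for $\Re\lambda$ large and then let $\lambda\to 0$, so that under the hypothesis $R\wedge\dbar\phi=0$ the surviving term is the holomorphic projection $\Pr\phi$. The one refinement the paper makes explicit is to restrict first to $z\in Z_{reg}\setminus\{h=0\}$ before evaluating at $\lambda=0$ (the left-hand side limit $|h(z)|^{2\lambda}\Phi(z)\to\phi(z)$ only makes sense there), so the identity $\phi=\Pr\phi$ holds generically and then extends; you should state this restriction rather than claim the identity directly on all of $Z_{reg}\cap\omega$.
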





By our  Koppelman formula
we can give a proof that provides  an explicit analytic extension
of $\phi$ to $\Omega$.

\begin{proof}
Assume that $\phi$ is meromorphic on $Z$ and let $\Phi$ be
a representative.  For 
$\Re\lambda>>0$ we have from Theorem~\ref{main}, 
$$
|h(z)|^{2\lambda}\Phi(z)=
\int  |h|^{2\lambda}HR \Phi\w g+ 
\int \dbar|h|^{2\lambda}\w  HR\Phi\w g\w  B.
$$
For $z \in Z_{reg}\setminus \{h=0\}$ we can take $\lambda=0$ and 
we get (after choosing various $h$)
the formula 
$$
\phi(z)=\int  HR\phi\w g+
\int H(R\w\dbar\phi)\w g
\w B, \quad z\in Z_{reg}\setminus P_\phi.
$$
If $R\w\dbar\phi=0$ it follows that $\phi(z)$ generically is equal
to the first term on the right hand side which is a strongly holomorphic function. 
\end{proof}

We conclude by formulating a conjecture.
If $\phi$ is weakly holomorphic then $P_\phi\subset Z_{sing}$
so $R\w\dbar\phi$ has support
on $Z_{sing}$. Since $R\w\dbar\phi$ 
is a $\HM$-current it follows for degree reasons that
it  must vanish if 
\begin{equation}\label{2v}
\codim Z_{sing}\ge 2+p,\quad \codim Z_k\ge 2+k,\ k>p,
\end{equation}
see \cite{A13}.
This means that all weakly holomorphic functions are indeed
strongly holomorphic if \eqref{2v} is fulfilled.
One can check that \eqref{2v} is  equivalent to the
conditions $R1$ and $S2$ in Serre's criterion, 
see, e.g., \cite{Eis1}. Therefore \eqref{2v} is
indeed equivalent to that all  (germs of) weakly holomorphic functions
are holomorphic, i.e., $Z$ is a normal variety.

Suppose that $\phi$ is a smooth $\dbar$-closed $(0,q)$-form in
$Z_{reg}$ and assume that $\phi$  admits some reasonable extension
across $Z_{sing}$ so that $R\w\dbar\phi$ is a hypermeromorphic
current. Arguing as in \cite{A13}  it  follows that
$R\w\dbar\phi$ must vanish if
\begin{equation}\label{qv}
\codim Z_{sing}\ge 2+q+p,\quad \codim Z_k\ge 2+q+k,\ k>p,
\end{equation}
which is (equivalent to) the conditions $R_{q-1}$ and $S_q$.
The  Koppelman formula will then produce a smooth solution to
$\dbar\psi=\phi$ on $Z_{reg}$.
One could therefore conjecture that the Dolbeault cohomology groups
$H^{0,\ell}(Z_{reg})$ vanish for $\ell\le q$  if (and only if?)    \eqref{qv} holds.

If we consider $Z$ as an intrinsic analytic space, then 
in the notation in Remark~\ref{obsz} the condition \eqref{qv}
means that $\codim Z^r\ge 2+q+r$ for $r\ge 0$.










\def\listing#1#2#3{{\sc #1}:\ {\it #2},\ #3.}

\end{document}